\let\OldStatex\Statex
\renewcommand{\Statex}[1][3]{%
  \setlength\@tempdima{\algorithmicindent}%
  \OldStatex\hskip\dimexpr#1\@tempdima\relax}
\newtheoremstyle{style}
  {\topsep}
  {\topsep}
  {\itshape}
  {0pt}
  {\bfseries}
  {:}
  { }
  {\thmname{#1}\thmnumber{ #2}\thmnote{ (#3)}}
\theoremstyle{style}
\newtheorem{definition}{Definition}
\newtheorem{proposition}{Proposition}
\newtheorem{theorem}{Theorem}
\newtheorem{lemma}{Lemma}
\newtheorem{assumption}{Assumption}
\newcommand{\R}[1]{\mathbb{R}^{#1}}
\newcommand{\naturals}{\mathbb{N}}
\newcommand{\naturalsof}[1]{\mathbb{N}^{#1}}
\newcommand{\ubar}[1]{\underaccent{\bar}{#1}}
\newcommand{\zeroMat}[1]{0_{#1}}
\newcommand{\id}[1]{I_{#1}}
\newcommand{\norm}[1]{\left \| #1 \right \|_2}
\newcommand{\interval}{\mathcal{I}}
\newcommand{\CH}[1]{\texttt{conv}\big(#1\big)}
\newcommand{\boxOp}[1]{\texttt{box}\big(#1\big)}
\newcommand{\errOp}[1]{\texttt{err}\big(#1\big)}
\newcommand{\incrOp}[1]{\texttt{incr}(#1)}
\newcommand{\decrOp}[1]{\texttt{decr}(#1)}
\newcommand{\resetOp}[1]{\texttt{reset}(#1)}
\newcommand{\redOp}[2]{\texttt{red}\big(#1,#2\big)}
\newcommand{\paramOp}[1]{\texttt{param}\big(#1\big)}
\renewcommand{\algref}[1]{Alg.~\ref{#1}} 
\newcommand{\tabref}[1]{Table~\ref{#1}}
\newcommand{\figref}[1]{Fig.~\ref{#1}}
\newcommand{\lineref}[1]{line~\ref{#1}}
\newcommand{\linerangeref}[2]{lines~\ref{#1}-\ref{#2}}
\newcommand{\propref}[1]{Prop.~\ref{#1}}
\newcommand{\defref}[1]{Def.~\ref{#1}}
\newcommand{\thmref}[1]{Theorem~\ref{#1}}
\newcommand{\assref}[1]{Assumption~\ref{#1}}
\newcommand{\lmmref}[1]{Lemma~\ref{#1}}
\newcommand{\sectref}[1]{Sec.~\ref{#1}}
\newcommand\eatpunct[1]{}
\newcommand{\params}{algorithm parameters}
\newcommand{\Paramsheading}{Algorithm Parameters}
\newcommand{\AppModPtextheading}{Propagation Parameters}
\newcommand{\AppModP}{\Phi_{\text{prop}}}
\newcommand{\AppModPidx}[1]{\Phi_{\text{prop,$#1$}}}
\newcommand{\AppModPmax}{\Phi_{\text{prop,max}}}
\newcommand{\SetReprP}{\Phi_{\text{set}}}
\newcommand{\SetReprPidx}[1]{\Phi_{\text{set,$#1$}}}
\newcommand{\etamax}{\eta_{\text{max}}}
\newcommand{\factor}{\mu}
\newcommand{\zonOrder}{\rho}
\newcommand{\bloatmax}{\varepsilon_{\text{max}}}
\newcommand{\bloatHR}{\varepsilon_{\mathcal{H}}}
\newcommand{\bloatHRti}[1]{\varepsilon_{\mathcal{H}}\big(#1\big)}
\newcommand{\bloatHRmax}{\varepsilon_{\mathcal{H},\text{max}}}
\newcommand{\bloatPR}{\varepsilon_{\mathcal{P}}}
\newcommand{\bloatPRti}[1]{\varepsilon_{\mathcal{P}}\big(#1\big)}
\newcommand{\bloatPRtiadm}[1]{\varepsilon_{\mathcal{P},\text{max},#1}}
\newcommand{\bloatPRmax}{\varepsilon_{\mathcal{P},\text{max}}}
\newcommand{\bloatSet}{\varepsilon_{\mathcal{S}}}
\newcommand{\bloatSetti}[1]{\varepsilon_{\mathcal{S}}\big(#1\big)}
\newcommand{\bloatSettiadm}[1]{\varepsilon_{\mathcal{S},\text{max},#1}}
\newcommand{\bloatSetmax}{\varepsilon_{\mathcal{S},\text{max}}}
\newcommand{\uTrans}{c_{u}}
\newcommand{\F}{\mathbf{F}_x}
\newcommand{\Fof}[1]{\mathbf{F}_x(#1)}
\newcommand{\E}{\mathbf{E}}
\newcommand{\Eof}[1]{\mathbf{E}(#1)}
\newcommand{\Eabsof}[1]{E_{\text{abs}}(#1)}
\newcommand{\inputCorr}{\mathbf{F}_u}
\newcommand{\inputCorrof}[1]{\mathbf{F}_u(#1)}
\newcommand{\Gsize}{\gamma}
\newcommand{\inputset}{\mathcal{U}}
\newcommand{\initset}{\mathcal{X}^0}
\newcommand{\HRset}{\mathcal{H}^\mathcal{R}}
\newcommand{\PRset}{\mathcal{P}^\mathcal{R}}
\newcommand{\HReqset}{\mathcal{H}^\mathcal{R}_=}
\newcommand{\HRplusset}{\mathcal{H}^\mathcal{R}_+}
\newcommand{\HRti}[1]{\mathcal{H}^\mathcal{R}\big(#1\big)}
\newcommand{\PRti}[1]{\mathcal{P}^\mathcal{R}\big(#1\big)}
\newcommand{\PRtiunderbrace}[1]{\mathcal{P}^\mathcal{R}{\textstyle(}#1{\textstyle)}}
\newcommand{\Rti}[1]{\mathcal{R}\big(#1\big)}
\newcommand{\HRexti}[1]{\mathcal{H}^\mathcal{R}_{\text{ex}}\big(#1\big)}
\newcommand{\PRexti}[1]{\mathcal{P}^\mathcal{R}_{\text{ex}}\big(#1\big)}
\newcommand{\Rexti}[1]{\mathcal{R}_{\text{ex}}\big(#1\big)}
\newcommand{\HReqti}[1]{\mathcal{H}^\mathcal{R}_=\big(#1\big)}
\newcommand{\PReqti}[1]{\mathcal{P}^\mathcal{R}_=\big(#1\big)}
\newcommand{\HRplusti}[1]{\mathcal{H}^\mathcal{R}_+\big(#1\big)}
\newcommand{\PRplusti}[1]{\mathcal{P}^\mathcal{R}_+\big(#1\big)}
\newcommand{\HReqtiunderbrace}[1]{\mathcal{H}^\mathcal{R}_={\textstyle(}#1{\textstyle)}}
\newcommand{\PReqtiunderbrace}[1]{\mathcal{P}^\mathcal{R}_={\textstyle(}#1{\textstyle)}}
\newcommand{\HRplustiunderbrace}[1]{\mathcal{H}^\mathcal{R}_+{\textstyle(}#1{\textstyle)}}
\newcommand{\PRplustiunderbrace}[1]{\mathcal{P}^\mathcal{R}_+{\textstyle(}#1{\textstyle)}}
\newcommand{\Sarb}{\mathcal{S}}
\newcommand{\Splus}{\mathcal{S}_{+}}
\newcommand{\Seq}{{\mathcal{S}_{=}}}
\newcommand{\Stot}{{\mathcal{S}_{\text{tot}}}}
\title{\LARGE \bf
Preparation of Papers for IEEE CSS Sponsored Conferences \& Symposia
}
\newcommand\copyrighttext{%
	\footnotesize \copyright 2020 IEEE. Personal use of this material is permitted. Permission from IEEE must be obtained for all other uses, in any current or future media, including reprinting/republishing this material for advertising or promotional purposes, creating new collective works, for resale or redistribution to servers or lists, or reuse of any copyrighted component of this work in other works.}
\newcommand\copyrightnotice{%
	\begin{tikzpicture}[remember picture,overlay]
		\node[anchor=south,yshift=10pt] at (current page.south) {\fbox{\parbox{\dimexpr\textwidth-\fboxsep-\fboxrule\relax}{\copyrighttext}}};
	\end{tikzpicture}%
}
\begin{document}




\title{\LARGE \bf Adaptive Parameter Tuning for Reachability Analysis of Linear Systems}

\author{Mark Wetzlinger, Niklas Kochdumper, and Matthias Althoff
\thanks{M. Wetzlinger, N. Kochdumper, and M. Althoff are with the Faculty of Computer Science,
        Technical University of Munich, Garching, Germany
        \{{\tt\small m.wetzlinger@tum.de}, {\tt\small niklas.kochdumper@tum.de}, {\tt\small althoff@tum.de}\}.
        The authors gratefully acknowledge partial financial supports
		from the research training group CONVEY funded by
		the German Research Foundation under grant GRK 2428.}%
}
%
%
%
%
%
\maketitle              

\begin{abstract}
	
Despite the possibility to quickly compute reachable sets
of large-scale linear systems,
current methods are not yet widely applied by practitioners.
The main reason for this is probably that current approaches
are not push-button-capable and still require to manually set crucial parameters,
such as time step sizes and the accuracy of the used set representation---these
settings require expert knowledge.
We present a generic framework
to automatically find near-optimal parameters for reachability analysis
of linear systems given a user-defined accuracy.
To limit the computational overhead as much as possible,
our methods tune all relevant parameters during runtime.
We evaluate our approach on benchmarks from the ARCH competition as well as on random examples.
The results show that our new framework verifies the selected benchmarks
faster than manually-tuned parameters and is an order of magnitude
faster compared to genetic algorithms.

 	
\end{abstract}




\copyrightnotice


\section{Introduction} 
\label{sec:Introduction}



Reachability analysis is one of the main techniques
to formally verify the correctness of mixed discrete/continuous systems:
It computes the set of reachable states of a system
for a set of uncertain initial states as well as uncertain inputs.
If the reachable set does not intersect the unsafe regions
defined by a given safety property, that property is satisfied.

Exact reachable sets can only be computed
for a limited class of systems \cite{LafPap2001}.
In the remaining cases, one calculates over-approximations
whose tightness is strongly influenced by \params{};
unlike model parameters, these parameters have no relation to the considered model.

Due to large over-approximations resulting from poor \params{},
reachability analysis may fail to prove a safety property
even though the property is satisfied by the exact reachable set.
We address this problem by proposing a novel generic framework
to automatically tune all \params{} for reachability analysis
of linear continuous time-invariant systems during runtime
respecting a user-defined error bound.
Our framework can be used for different reachability algorithms
and for different set representations.

\paragraph{State of the Art}

Reachability algorithms for linear continuous systems
are mainly based on the propagation of reachable sets
for multiple time steps until reaching a fixed point or a user-defined time horizon.
Propagation-based techniques have been extensively investigated
\cite{Althoff2007c,Che2015,Gurung2019,Girard2006,BFFPSV2018,Frehse2011}
using tools such as
\textit{CORA} \cite{Althoff2015a}, \textit{Flow*} \cite{Che2013},
\textit{XSpeed} \cite{Ray2015b}, \textit{SpaceEx} \cite{Frehse2011},
and \textit{JuliaReach} \cite{BogFor2019}.
A further method is to compute reachable sets using simulations
\cite{Bak2017b,Duggirala2016}, which is implemented
in the tool \textit{HyLAA} \cite{Bak2017}.
The used set representation is another distinctive feature
besides the method to compute reachable sets.
Many different set representations have been researched,
including polytopes \cite{Frehse2011},
zonotopes \cite{Girard2006}, ellipsoids \cite{Kurzhanski2000},
griddy polyhedra \cite{AsaDan2000}, star sets \cite{Bak2017b},
support functions \cite{Girard2008b},
and constrained zonotopes \cite{Scott2016}.

Previous approaches for automated parameter tuning
mainly focused on the time step size:
Numerical ODE solvers often compute different solutions in parallel
and decrease the time step size if the difference
between solutions exceeds a certain threshold
\cite{Lapidus1971,Ascher1994}.
Furthermore, several automated time step adaptation strategies
have been developed for \textit{guaranteed integration} methods
that enclose only a single trajectory rather than a set of trajectories
\cite{Kerbl1991,Rufeger1993,Nedialkov2000}.
For reachability analysis, only a few methods automatically set the time step size.
The approach in \cite{Frehse2011} chooses the time step size
automatically in order to keep the error in a user-defined direction
below a user-defined bound.
Furthermore, the approach in \cite{PraVis2011} automatically chooses
suitable time step sizes for affine systems
so that the Hausdorff distance between the reachable set
obtained by the exact flow and the approximated flow stays below a certain threshold.
So far, there is no approach that considers all \params{}.

\paragraph{Contributions}

We introduce a novel generic framework that automatically tunes
all \params{} of a reachability algorithm
so that the over-approximation error stays below a user-defined threshold.
This generalized framework can be applied to
many different reachability algorithms
and we show that our self-parametrization approach always converges
if the reachability algorithm satisfies certain requirements.

\medskip
After introducing some preliminaries in \sectref{sec:prelim},
we present our generic framework in \sectref{sec:selfparam}.
In \sectref{sec:implementation}, we demonstrate the implementation
of our framework for a specific reachability algorithm.
Finally, the evaluation of our implementation on several numerical examples
in \sectref{sec:numex} demonstrates
the fully automated computation
of tight over-approximations of reachable sets
with only little computational overhead. 


\section{Preliminaries}
\label{sec:prelim}

To concisely explain the novelties of our paper,
we first introduce some preliminaries.

\subsection{Notation}
\label{ssec:notation}

Vectors are denoted by lower-case letters
and matrices by upper-case letters.
Square matrices of zeros are denoted by
${\zeroMat{n} \in \R{n \times n}}$
and the identity matrix by ${\id{n} \in \R{n \times n}}$.
Given a vector ${a \in \R{n}}$, $a_i$ refers to the $i$-th entry.
An  $n$-dimensional interval is denoted by
$\interval{} = [\ubar{l},\bar{l}] \subset \R{n}$,
where $\ubar{l}_i \leq \bar{l}_i$, $\forall i \in \{1,...,n\}$.
The operators $\inf(\interval{}) = \ubar{l} \in \R{n}$
and $\sup(\interval{}) = \bar{l} \in \R{n}$
return the infimum and the supremum of
an interval $\interval{} = [\ubar{l},\bar{l}]$.
Interval matrices are denoted in boldface:
$\mathbf{I}=[\ubar{I},\bar{I}] \in \R{m \times n}$,
where $\inf(\mathbf{I}) = \ubar{I} \in \R{m \times n}$
and $\sup(\mathbf{I}) = \bar{I} \in \R{m \times n}$.
The linear map is written without any operator between the operands,
the Minkowski sum is denoted by $\oplus$,
and the convex hull operator by $\CH{\cdot}$.

\subsection{Reachability Analysis of Linear Systems}
\label{ssec:ra_linsys}

The presented technique for adaptive self-parametrization
is applied to linear time-invariant (LTI) systems
%
with initial states bounded by $\initset{} \subset \R{n}$
and inputs bounded by $\inputset{} \subset \R{n}$:
\begin{align} 
	\begin{split}  \label{eq:LTI}
	&\dot{x}(t) = A x(t) + u(t) \; , \\
	&\text{with} \quad x(0) \in \initset{} \subset \R{n}, \;
		\forall t: u(t) \in \inputset{} \subset \R{n} \; ,
	\end{split}
\end{align}
where $A \in \R{n \times n}$ is the system matrix,
$x(t) \in \R{n}$ is the state vector,
and $u(t) \in~\R{n}$ is the input vector.

The above system also encompasses systems
$\dot{x}(t) = A x(t) + B \tilde{u}(t)$,
where $B \in \R{m \times n}$ is the input matrix,
as we can set $\inputset = \{B\tilde{u} \, | \, \tilde{u} \in \mathcal{D} \subset \R{m}\}$.
Let us introduce $\xi \big(t; x_0, u(\cdot) \big)$
as the solution of \eqref{eq:LTI} to define the exact reachable set
%
$\Rexti{[0,t_f]}$ of \eqref{eq:LTI} over the time horizon $t \in [0,t_f]$:
\begin{align*}
	\Rexti{[0,t_f]} &= \Big\{ \xi \big(t; x_0, u(\cdot) \big) \, \Big| \, x_0 \in \initset{}, \\
	&\qquad \quad \forall \tau \in [0,t]: u(\tau) \in \inputset{}, \, t \in [0,t_f] \Big\} \; .
\end{align*}

Due to the superposition principle, $\Rexti{[0,t_f]}$ is
the sum of the homogeneous solution~$\HRexti{[0,t_f]}$
and the inhomogeneous solution~$\PRexti{[0,t_f]}$ \cite[(3.6)]{Althoff2010a}:
\begin{equation*}
	\Rexti{[0,t_f]} = \HRexti{[0,t_f]} \oplus \PRexti{[0,t_f]} \; ,
\end{equation*}
where
\begin{align*}
	\HRexti{[0,t_f]} &= \Big\{ e^{A t} x_0 \, \Big|
		\, x_0 \in \initset{}, \, t \in [0,t_f] \Big\} \; , \\ 
	\PRexti{t_f} &= \bigg\{ \int_0^{t_f} e^{A (t_f-\tau)} u(\tau) \, \mathrm{d}\tau \, \bigg|
			\, u(\tau) \in \inputset{} \bigg\} \; . 
\end{align*}
%
If $0 \in \inputset{}$, $\PRexti{[0,t_f]} = \PRexti{t_f}$
and the extension for $0 \notin \inputset{}$
is shown in \cite[Sec. 3.2.2]{Althoff2010a}.
%
To limit the over-approximation,
the time horizon $[0,t_f]$ is discretized by $K$ time steps
$\Delta t_i = t_{i+1} - t_i > 0$, $i \in \{0,...,K-1\}$,
where $t_f = \sum_{i=0}^{K-1} \Delta t_i$
so that
$\bigcup_{i=0}^{K-1}~\Rti{[t_i,t_{i+1}]}$.
		%
		%
		We compute the reachable sets from the first time interval
		\cite[Chap. 4]{LeG2009}:
		\begin{align}
			\begin{split} \label{eq:HRPR_start}
				\HRti{[t_i,t_{i+1}]} &= e^{A t_i} \HRti{[0,\Delta t_i]} \; ,  \\
				\PRti{[0,t_{i+1}]} &= \PRti{[0, t_i]} \oplus
					\underbrace{e^{A t_i} \PRti{[0,\Delta t_i]}}_%
					{=: \, \PRtiunderbrace{[t_i,t_{i+1}]}} \; .
			\end{split}
		\end{align}
		This method adapts seamlessly to varying time step sizes,
		making it the preferred choice for adaptive parameter tuning
		compared to methods propagating the reachable set from the previous time step.




\section{Self-Parametrization}
\label{sec:selfparam}

In this section, we introduce our novel algorithm for adaptive parameter tuning.

\subsection{Overview of \Paramsheading{}}
\label{ssec:paramoverview}

The reachability algorithm from \sectref{ssec:ra_linsys}
depends on different \params{},
which we divide into:
\begin{itemize}
	\item \textbf{Time Step Size:}
		The propagation in~\eqref{eq:HRPR_start} can be applied to any series
		of $\Delta t_i$ adding up to $t_f$.
		Large $\Delta t$ speed up the computation, while small $\Delta t$ increase the precision.
	\item \textbf{\AppModPtextheading{}:}
		The tightness of the computed reachable sets
		also depends on parameters approximating the dynamics $\AppModP{}$,
		e.g., determining the precision of computing $e^{At}$.
		We introduce the operator $\incrOp{\AppModP{}}$
		which increments the parameters $\AppModP{}$ towards values
		which result in a tighter reachable set,
		as well as the operator $\resetOp{\AppModP{}}$
		which resets~$\AppModP{}$ to the coarsest setting.
	\item \textbf{Set Representation:}
		We denote the number of scalar values needed to describe a set by~$\SetReprP{}$.
		Let us introduce the operator $\decrOp{\SetReprP{}}$ which decreases $\SetReprP{}$.
		If $\SetReprP{}$ has reached its minimum, it returns $\decrOp{\SetReprP{}} = \SetReprP{}$.
		We denote the over-approximation of a set $\Sarb{}$
		by reducing its number of parameters
		to a desired value $\SetReprP{}$ by the operator $\redOp{\Sarb{}}{\SetReprP{}}$.
\end{itemize}
%


\subsection{Error measures}
\label{ssec:errors}

As we can only compute over-approximations, 
we are interested in measuring the over-approximation error.
Let the reachable set $\HRset{}$ consist of summands, e.g.,
$\HRset{} = \HRset{}^{(1)} \oplus ... \oplus \HRset{}^{(\nu)}$,
as shown for a concrete implementation in \sectref{ssec:setprop}.
This assumption is valid for almost all approaches, see e.g.
\cite{Althoff2010a,Girard2005,Girard2008b,Frehse2011,BFFPSV2018}.
We sum over all indices $\mathcal{E}$ representing exactly-computed terms
to obtain $\HReqset{} = \bigoplus_{i \in \mathcal{E}}^{\nu} \HRset{}^{(i)}$
and the remaining terms are summed to $\HRplusset{} = \bigoplus_{i \notin \mathcal{E}}^{\nu} \HRset{}^{(i)}$.
The same is done for $\PRset{}$, so that we obtain
\begin{align}
	\HRti{[0,\Delta t]} &= \HReqti{[0,\Delta t]} \oplus \HRplusti{[0,\Delta t]} \; ,
		\label{eq:HRtieqplus} \\
	\PRti{[0,\Delta t]} &= \PReqti{[0,\Delta t]} \oplus \PRplusti{[0,\Delta t]} \; .
		\label{eq:PRtieqplus}
\end{align}
We use this separation to realize a computationally efficient over-approximation
of the Hausdorff distance
%
\begin{align*}
	d_H (\Sarb{}_1, \Sarb{}_2)
		= \max &\Big\{ \sup_{x_1 \in \Sarb{}_1}
				\big( \, \inf_{x_2 \in \Sarb{}_2} \, \norm{x_1 - x_2} \,
			\big), \\
		&\quad \sup_{x_2 \in \Sarb{}_2}
			\big( \, \inf_{x_1 \in \Sarb{}_1} \norm{x_1 - x_2} \,
			\big) \Big\} \; .
\end{align*}
\begin{proposition} \label{prop:bloat} 
Let $\Seq{} \subset \R{n}$ and $\Splus{} \subset \R{n}$
with ${0 \in \Splus{}}$ be non-empty compact, convex sets.
The Hausdorff distance between $\Seq{}$ and $\Stot{} := \Seq{} \oplus \Splus{}$
can be over-approximated by
\begin{equation} \label{eq:bloat}
	d_H(\Seq{},\Stot{}) \leq \errOp{\Splus{}}
		:= r \big( \boxOp{\Splus{}} \big) \; ,
\end{equation}
where $\boxOp{\Splus{}}$ is the box enclosure of $\Splus{}$
and $r(\cdot)$ returns the radius of the smallest hypersphere
centered at the origin enclosing its argument.
\end{proposition}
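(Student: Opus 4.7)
The plan is to handle the two directed Hausdorff distances separately and then apply a monotonicity argument to pass from $\Splus$ to its box enclosure.

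First I would dispose of the asymmetric term $\sup_{x_1 \in \Seq} \inf_{x_2 \in \Stot} \norm{x_1-x_2}$. Because $0 \in \Splus$, every $x_1 \in \Seq$ can be written as $x_1 \oplus 0 \in \Stot$, so $\Seq \subseteq \Stot$ and the inner infimum vanishes identically. Hence this directed distance equals $0$ and contributes nothing to the max defining $d_H$.

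Next I would tackle the remaining term $\sup_{x_2 \in \Stot} \inf_{x_1 \in \Seq} \norm{x_1-x_2}$. By definition of the Minkowski sum, any $x_2 \in \Stot$ admits a decomposition $x_2 = s_e + s_p$ with $s_e \in \Seq$ and $s_p \in \Splus$; picking the specific competitor $x_1 = s_e \in \Seq$ in the infimum bounds it by $\norm{s_p}$. Taking the supremum, this yields
\begin{equation*}
    \sup_{x_2 \in \Stot} \inf_{x_1 \in \Seq} \norm{x_1-x_2}
        \;\leq\; \sup_{s_p \in \Splus} \norm{s_p} \; .
\end{equation*}
The right-hand side is exactly the radius of the smallest origin-centered hypersphere enclosing $\Splus$, i.e.\ $r(\Splus)$.

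Finally I would close the gap between $r(\Splus)$ and $r(\boxOp{\Splus})$. Since $\Splus \subseteq \boxOp{\Splus}$ by construction of the box enclosure, and $r(\cdot)$ is clearly monotone with respect to set inclusion (enlarging the set can only enlarge the smallest enclosing origin-centered sphere), we obtain $r(\Splus) \leq r(\boxOp{\Splus}) = \errOp{\Splus}$. Combining with the first step gives the claimed bound \eqref{eq:bloat}. I do not expect a real obstacle here; the only subtlety worth stating carefully is that all suprema and infima are attained (justifying the decomposition argument), which follows from compactness of $\Seq$ and $\Splus$ and hence of $\Stot$.
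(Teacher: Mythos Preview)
Your proof is correct and follows essentially the same route as the paper: eliminate one directed distance via $\Seq \subseteq \Stot$ (from $0 \in \Splus$), bound the other by $r(\Splus)$ via the decomposition $x_2 = s_e + s_p$, and finish with monotonicity of $r(\cdot)$ under the box enclosure. The only cosmetic difference is that the paper asserts the intermediate \emph{equality} $d_H(\Seq,\Stot)=r(\Splus)$, whereas you (correctly, and sufficiently for the claim) establish only the inequality $\leq$.
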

\begin{proof} 
Since $\Stot{}$ encloses $\Seq{}$, we have
\begin{equation} \label{eq:dH_r}
	d_H(\Seq{},\Stot{})
		= \sup_{y \in \Stot{}} \big( \, \inf_{x \in \Seq{}} \, \norm{x - y} \big) \; .
\end{equation}
We also know that the difference between $\Stot{}$ and $\Seq{}$ is given by $\Splus{}$,
so that \eqref{eq:dH_r} can be equivalently written as
\begin{equation*}
	\sup_{y \in \Stot{}} \big( \, \inf_{x \in \Seq{}} \norm{x - y} \, \big)
		\overset{0 \in \Splus{}}{=} \sup_{x \in \Splus{}} \norm{x} = r ( \Splus{} ) \; .
\end{equation*}
Obviously, $r(\Splus{}) \leq r\big(\boxOp{\Splus{}}\big)$
and therefore $d_H(\Seq{},\Stot{}) \leq \errOp{\Splus{}}$.
\end{proof}

Let us introduce user-defined upper bounds for errors:
$\bloatHRmax{}$ for the error of the homogeneous solution $\HRset{}$ and
$\bloatPRmax{}$ for the error of the inhomogeneous solution $\PRset{}$.
Our approach ensures that these errors are not exceeded.

We now compute the errors of the $i$-th time step.
For the error $\bloatHRti{[t_i,t_{i+1}]}$ and $\bloatPRti{[t_i,t_{i+1}]}$
we use the split in~\eqref{eq:HRtieqplus} and apply the error measure from \propref{prop:bloat}:
\begin{align}
	\bloatHRti{[t_i,t_{i+1}]} &:= \errOp{\HRplusti{[t_i,t_{i+1}]}} \label{eq:bloatHRti} \\
		&\geq d_H \big( \HReqti{[t_i,t_{i+1}]},\HRti{[t_i,t_{i+1}]} \big) \; , \nonumber \\
	\bloatPRti{[t_i,t_{i+1}]} &:= \errOp{\PRplusti{[t_i,t_{i+1}]}} \label{eq:bloatPRti} \\
		&\geq d_H \big( \PReqti{[t_i,t_{i+1}]},\PRti{[t_i,t_{i+1}]} \big) \; . \nonumber
\end{align}
%
From \eqref{eq:HRPR_start}, we see that the computation of $\bloatHR{}$ does not depend on previous time intervals.
Contrary, the inhomogeneous solution $\PRti{[0,t_i]}$
accumulates over time, see \eqref{eq:HRPR_start},
and with it the error $\bloatPR{}$:
\begin{equation} 
	\bloatPRti{[0,t_{i+1}]} = \bloatPRti{[0,t_i]} + \bloatPRti{[t_i,t_{i+1}]} \; ,
		\label{eq:bloatPRti_acc}
\end{equation}
where $\bloatPRti{[0,0]} = 0$.
Let us introduce a mild assumption for the errors $\bloatHR{}$ and $\bloatPR{}$,
which will be justified in \sectref{ssec:verass}.

\begin{assumption} \label{ass:bloatHRPR} (Convergence of $\bloatHR{}$ and $\bloatPR{}$)
We neglect floating-point errors so that $\bloatHRti{[t_i,t_{i+1}]} \rightarrow 0$
and $\bloatPRti{[t_i,t_{i+1}]} \rightarrow 0$ for $\Delta t_i \rightarrow 0$. \hfill $\square$
\end{assumption}
\assref{ass:bloatHRPR} is mild, since it practically holds
for all other current approaches, see e.g. \cite{Althoff2015a, Frehse2011, BogFor2019}.
For further derivations we need the following definition.
\begin{definition} \label{def:superlinear} (Superlinear decrease)
	The set-based evaluation $f(\Sarb{}(t)) = \{ f(x) \, | \, x \in \Sarb{}(t) \}$
	of a continuous function $f: \R{n} \rightarrow \R{}$
	decreases superlinearly if
	\begin{equation*} 
		\hspace{8pt} \forall \varphi \in (0,1):
			f\big(\Sarb{}\big([t,t + \varphi \Delta t]\big)\big)
				\subseteq \varphi \, f\big(\Sarb{}\big([t,t + \Delta t]\big)\big) \; .
		\hspace{5pt} \square
	\end{equation*}
\end{definition}
%
The following proposition addresses satisfying $\bloatPRmax{}$.

\begin{proposition} \label{prop:bloatPRtiadm}
Let the error $\bloatPRti{[t_i,t_{i+1}]}$
decrease superlinearly according to \defref{def:superlinear}.
For any given $\bloatPRmax{} > 0$, there exists a sequence of
time intervals $[t_i,t_{i+1}]$ so that
\begin{equation*} 
	\sum_i \bloatPRti{[t_i,t_{i+1}]} \leq \bloatPRmax{} \; .
\end{equation*}
\end{proposition}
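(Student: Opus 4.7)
The plan is to construct a uniform partition of the time horizon $[0,T]$ fine enough that the accumulated per-step error falls below the user-defined bound $\bloatPRmax$. The two key ingredients are the vanishing of the per-step error from \assref{ass:bloatHRPR} and the scaling relation from \defref{def:superlinear}.

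First I would fix a reference step length $\Delta t_{\mathrm{ref}} > 0$ and observe that, by \assref{ass:bloatHRPR} combined with continuity of the propagation factor $e^{A t_i}$ appearing in \eqref{eq:HRPR_start}, the quantity $C(\Delta t_{\mathrm{ref}}) := \sup_{t \ge 0} \bloatPRti{[t, t + \Delta t_{\mathrm{ref}}]}$ is finite on the compact horizon and tends to zero as $\Delta t_{\mathrm{ref}} \to 0$.

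Next I would pick $N \in \naturals$ with $\Delta t := T/N \le \Delta t_{\mathrm{ref}}$ and form the uniform partition $t_i = i \Delta t$. Applying \defref{def:superlinear} to each sub-interval with $\varphi = \Delta t / \Delta t_{\mathrm{ref}} \in (0,1]$ yields
\begin{equation*}
	\bloatPRti{[t_i, t_{i+1}]} \;\le\; \frac{\Delta t}{\Delta t_{\mathrm{ref}}}\, \bloatPRti{[t_i, t_i + \Delta t_{\mathrm{ref}}]} \;\le\; \frac{\Delta t}{\Delta t_{\mathrm{ref}}}\, C(\Delta t_{\mathrm{ref}}),
\end{equation*}
and summing over $i \in \{0, \ldots, N-1\}$ produces $\sum_i \bloatPRti{[t_i, t_{i+1}]} \le (T/\Delta t_{\mathrm{ref}})\, C(\Delta t_{\mathrm{ref}})$. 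I would then shrink $\Delta t_{\mathrm{ref}}$ until this product is at most $\bloatPRmax$ and take any compatible~$N$; the resulting partition is the desired sequence.

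The hard part will be verifying that the ratio $C(\Delta t_{\mathrm{ref}})/\Delta t_{\mathrm{ref}}$ itself tends to zero as $\Delta t_{\mathrm{ref}} \to 0$, since a single application of \defref{def:superlinear} only supplies a linear-in-$\varphi$ bound which would leave the summed estimate merely bounded. I expect to close this gap by iterating \defref{def:superlinear} to show that $\Delta t \mapsto \bloatPR(\Delta t)/\Delta t$ is monotone non-decreasing in $\Delta t$, so that its limit as $\Delta t \to 0^+$ exists, and then using the strict content of the subset inclusion in \defref{def:superlinear} together with \assref{ass:bloatHRPR} to rule out a positive limit---upgrading $\bloatPR \to 0$ to the genuinely superlinear rate that forces the summed bound to collapse below $\bloatPRmax$.
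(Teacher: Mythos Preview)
Your approach differs from the paper's: instead of a uniform partition, the paper constructs the intervals greedily by introducing a per-step admissible budget
\[
\bloatPRtiadm{i} := \frac{\bloatPRmax - \bloatPRti{[0,t_i]}}{T-t_i}\,\Delta t_i,
\]
which is \emph{linear} in $\Delta t_i$, and argues that because $\bloatPRti{[t_i,t_{i+1}]}$ decreases superlinearly one can always shrink $\Delta t_i$ until $\bloatPRti{[t_i,t_{i+1}]} \le \bloatPRtiadm{i}$; a short telescoping computation then yields $\sum_j \bloatPRtiadm{j} \le \bloatPRmax$. This construction has the merit of mirroring \algref{alg:postTSTT} directly, whereas your uniform partition is cleaner analytically but disconnected from the algorithm that the proposition is meant to justify.

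Your identification of the hard part is accurate, but the proposed fix does not work as stated. \defref{def:superlinear} uses the weak inclusion $\subseteq$, so a purely linear error $\bloatPRti{[t,t+\Delta t]} = c\,\Delta t$ satisfies it with equality; there is no ``strict content'' to extract, and iterating the inequality only gives monotonicity of the ratio $\bloatPRti{[t,t+\Delta t]}/\Delta t$, never that its limit is zero. The paper's own proof has precisely the same lacuna---it asserts that a superlinearly decreasing quantity eventually falls below a linear one without deriving $o(\Delta t)$ behaviour from \defref{def:superlinear} and \assref{ass:bloatHRPR} alone. Both arguments go through once one knows the ratio tends to zero, which \emph{is} the case for the concrete error \eqref{eq:bloatPRti_cora} in \sectref{ssec:verass} since $\Eof{\Delta t,\eta}\,\Delta t$ is $O(\Delta t^{\eta+2})$; but at the abstract level of the proposition as stated, neither your route nor the paper's closes this gap.
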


\noindent \textit{Proof.}
We define an admissible error $\bloatPRtiadm{i}$ for each step:
\begin{equation} \label{eq:bloatPRtiadm}
	\bloatPRti{[t_i,t_{i+1}]} \leq
	\bloatPRtiadm{i} := \frac{\bloatPRmax{} - \bloatPRti{[0,t_i]}}{t_f - t_i} \, \Delta t_i .
\end{equation}
For $\Delta t_i \rightarrow 0$, $\bloatPRtiadm{i}$ converges to 0,
as does $\bloatPRti{[t_i,t_{i+1}]}$ by \assref{ass:bloatHRPR}.
Moreover, $\bloatPRtiadm{i}$ decreases linearly in~$\Delta t_i$,
whereas $\bloatPRti{[t_i,t_{i+1}]}$ decreases superlinearly by assumption.
Hence, we can always obtain a $\Delta t_i$ so that the inequality in \eqref{eq:bloatPRtiadm} holds.
We sum each side in \eqref{eq:bloatPRtiadm} to
\begin{equation} \label{eq:bloatPRsumti}
	\bloatPRti{[0,t_i]} \leq \sum_{j=0}^{i-1} \bloatPRtiadm{j}
\end{equation}
which is subsequently used to bound $\bloatPRtiadm{i}$ and $\bloatPRmax{}$: 
\begin{align*}
	\hspace{26pt}
	\bloatPRtiadm{i}
		&\overset{\eqref{eq:bloatPRtiadm}}{=}
			\big( \bloatPRmax{} - \bloatPRti{[0,t_i]} \big) \frac{\Delta t_i}{t_f - t_i} \\
		&\overset{\eqref{eq:bloatPRsumti}}{\leq}
			\big( \bloatPRmax{} - \sum_{j=0}^{i-1} \bloatPRtiadm{j} \big) \frac{\Delta t_i}{t_f - t_i} \\
	\Rightarrow \bloatPRmax{}
		&\geq \sum_{j=0}^{i-1} \bloatPRtiadm{j}
			+ \underbrace{\frac{t_f - t_i}{\Delta t_i}}_{\geq 1} \bloatPRtiadm{i} \\
		&\geq \sum_{j=0}^{i} \bloatPRtiadm{j} \geq \sum_{i} \bloatPRti{[t_i, t_{i+1}]} \; .
	\hspace{26pt}	
	\square
\end{align*}

The Minkowski sum in~\eqref{eq:HRPR_start} typically increases the representation size of the resulting set.
To counteract this growth, we enclose~$\PRset{}$ in~\eqref{eq:HRPR_start}
by a set specified by less parameters which over-approximates the original set
by the error $\bloatSet{}$ accumulating
as the error $\bloatPR{}$ of $\PRset{}$:
\begin{align}
	&\bloatSetti{[0,t_{i+1}]} = \bloatSetti{[0,t_i]} + \bloatSetti{[t_i,t_{i+1}]} \; ,
		\label{eq:bloatSetti_prop} \\
	&\text{with} \quad \bloatSetti{[0,0]} = 0 \; . \nonumber
\end{align}
Similarly to \propref{prop:bloatPRtiadm},
we define an admissible error $\bloatSettiadm{i}$: 
\begin{equation} \label{eq:bloatSettiadm}
	\bloatSetti{[t_i,t_{i+1}]} \leq \bloatSettiadm{i}
		:= \frac{\bloatSetmax{} - \bloatSetti{[0,t_i]}}{t_f - t_i} \, \Delta t_i \; .
\end{equation}
If we do not over-approximate the set representation
in the $i$-th time step, we have $\bloatSetti{[t_i,t_{i+1}]} = 0$.


\subsection{Adaptive Parameter Tuning}
\label{ssec:tuning}

We now present our generic framework for adaptive parameter tuning
in \algref{alg:full}.
The two subroutines, \algref{alg:postTSTT} and \algref{alg:propInhom},
will be presented thereafter.
First, we initialize $\Delta t$ and the scaling factor $\mu$ (\lineref{alg:full:inits}),
as well as the accumulating errors $\bloatPR{}$ and $\bloatSet{}$
(\lineref{alg:full:initbloat})
for the first iteration of the main loop
(\linerangeref{alg:full:mainloop_start}{alg:full:mainloop_end}).
In every iteration, we first obtain the parameters
$\Delta t_i$ and $\AppModPidx{i}$ (\lineref{alg:full:postTSTT})
from \algref{alg:postTSTT}.
Then, we calculate the reachable sets
$\HRti{[0,\Delta t_i]}$ and $\PRti{[0,\Delta t_i]}$ (\lineref{alg:full:HRPR0Deltati}),
which are subsequently used to obtain $\HRti{[t_i,t_{i+1}]}$
and $\PRti{[0,t_{i+1}]}$ (\lineref{alg:full:HRPR_prop}).
In \algref{alg:propInhom}, we obtain the parameters $\SetReprPidx{i}$,
which are used to recalculate $\PRti{[0,t_{i+1}]}$ (\lineref{alg:full:propInhom}).
At the end of each step, we add the homogeneous and inhomogeneous solution
to obtain the reachable set $\Rti{[t_i, t_{i+1}]}$ (\lineref{alg:full:Rti}).
Finally, after the calculation of the reachable set for each time interval,
the reachable set of the whole time horizon $\Rti{[0,t_f]}$
is obtained by unifying partial sets (\lineref{alg:full:R0T}).

\begin{theorem} \label{thm:alg:full}
\algref{alg:full} terminates and the overall error
stays below a user-defined error bound $\bloatmax{} \in \R{+}$,
assuming the reduction of the set representation in each step is optional.
\end{theorem}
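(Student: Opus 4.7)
The plan is to split \thmref{thm:alg:full} into two separate claims: that the overall error stays below $\bloatmax$, and that \algref{alg:full} terminates in finitely many iterations. For the error claim, I would first decompose the user-defined budget into per-component bounds $\bloatHRmax$, $\bloatPRmax$, and $\bloatSetmax$ with $\bloatHRmax + \bloatPRmax + \bloatSetmax \leq \bloatmax$, and argue each contribution separately, leveraging the fact that the three parameter categories from \sectref{ssec:paramoverview} control these contributions independently.

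For the homogeneous part, I would use that \eqref{eq:HRPR_start} applies the map $e^{A t_i}$ exactly, so the error does not accumulate and it suffices to enforce $\bloatHRti{[t_i,t_{i+1}]} \leq \bloatHRmax$ step-wise. \assref{ass:bloatHRPR} ensures that either shrinking $\Delta t_i$ or iterating $\incrOp{\AppModPidx{i}}$ drives this quantity to zero, which is what the subroutine in \algref{alg:postTSTT} is designed to exploit. For the inhomogeneous part, I would invoke \propref{prop:bloatPRtiadm} directly, and for the set-representation part I would give the analogous argument using \eqref{eq:bloatSetti_prop} and the admissible bound \eqref{eq:bloatSettiadm}. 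The optionality assumption in the theorem statement enters here: if a reduction would violate $\bloatSettiadm{i}$, the algorithm may skip it and set $\bloatSetti{[t_i,t_{i+1}]} = 0$, so the constraint is always satisfiable.

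For termination, I would argue that the time $t_i$ advances by a strictly positive $\Delta t_i$ in each iteration, and that the $\Delta t_i$ returned by \algref{alg:postTSTT} is bounded below by a uniform positive constant until the horizon is reached. The underlying mechanism is the linear-versus-superlinear comparison already used in the proof of \propref{prop:bloatPRtiadm}: the admissible budgets scale linearly in $\Delta t_i$, whereas the actual per-step errors decrease superlinearly by \defref{def:superlinear}, so a feasible $\Delta t_i > 0$ always exists.

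The main obstacle will be making this lower bound on $\Delta t_i$ uniform over the whole horizon rather than merely local to a single step. Near $t_i = T$, the factor $\Delta t_i/(T - t_i)$ in \eqref{eq:bloatPRtiadm} could in principle force the admissible budget to collapse, so I would need an inductive argument showing that the remaining-budget-per-remaining-time ratios $(\bloatPRmax - \bloatPRti{[0,t_i]})/(T - t_i)$ and $(\bloatSetmax - \bloatSetti{[0,t_i]})/(T - t_i)$ stay bounded away from zero, using \eqref{eq:bloatPRsumti} as in the proof of \propref{prop:bloatPRtiadm}. Alternatively, I would side-step this by appealing directly to the telescoping estimate at the end of that proof, which already certifies that finitely many admissible steps suffice to cover $[0,T]$ within the error budget.
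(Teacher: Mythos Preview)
Your error-bound argument---splitting $\bloatmax$ into $\bloatHRmax + \bloatPRmax + \bloatSetmax$ and handling the three pieces via non-accumulation of $\HRset$, \propref{prop:bloatPRtiadm}, and the optionality of reduction---is essentially the paper's proof. The divergence is in termination of the outer loop. The paper argues only that the inner loops of \algref{alg:postTSTT} and \algref{alg:propInhom} terminate and then simply asserts that the main loop terminates ``as the time $t$ monotonically increases, eventually reaching $T$,'' without addressing whether the $\Delta t_i$ could shrink fast enough that $\sum_i \Delta t_i$ stalls short of $T$. You go further and seek a uniform positive lower bound on $\Delta t_i$; your inductive route via the ratio $(\bloatPRmax - \bloatPRti{[0,t_i]})/(T - t_i)$ is the right one (that ratio is non-decreasing along the run, hence bounded below by $\bloatPRmax/T$), and together with the superlinear decay being uniform over $t_i \in [0,T]$ it delivers the desired bound. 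Your alternative---reading finiteness of the step count off the telescoping estimate in \propref{prop:bloatPRtiadm}---does not work as stated, since that estimate bounds the accumulated error, not the number of steps. In short, your plan is more careful than the paper's on termination, at the cost of extra work the paper simply omits; drop the telescoping shortcut and keep the ratio argument.
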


\FloatBarrier

\begin{algorithm}[h!tb]
	\caption{Fully automated parameter tuning} \label{alg:full}
	\textbf{Input:} $\bloatHRmax{}, \bloatPRmax{}, \bloatSetmax{}, t_f$
	
	\textbf{Output:} $\Rti{[0,t_f]}$
	\begin{algorithmic}[1]
		\State $t \gets 0, i \gets 0$
		\State $\factor{} \gets 0.9, \Delta t_{-1} \gets t_f \, \factor{}$
			\label{alg:full:inits}
		\State $\bloatPRti{[0,0]} \gets 0$, $\bloatSetti{[0,0]} \gets 0$
			\label{alg:full:initbloat}
		\While{$t < T$} 
			\label{alg:full:mainloop_start}
			\State $\Delta t_i, \AppModPidx{i}, \bloatPRti{[0,t_{i+1}]} \gets$ \algref{alg:postTSTT}
				\label{alg:full:postTSTT}
			\State calc. $\HRti{[0,\Delta t_i]}, \PRti{[0,\Delta t_i]}$ using $\Delta t_i$, $\AppModPidx{i}$
				\label{alg:full:HRPR0Deltati}
			\State calc. $\HRti{[t_i,t_{i+1}]}, \PRti{[0,t_{i+1}]}$ acc. to \eqref{eq:HRPR_start}
				\label{alg:full:HRPR_prop}
			\State $\PRti{[0,t_{i+1}]}, \bloatSetti{[0,t_{i+1}]} \gets$ \algref{alg:propInhom}
				\label{alg:full:propInhom}
			\State $\Rti{[t_i, t_{i+1}]} = \HRti{[t_i,t_{i+1}]} \oplus \PRti{[0,t_{i+1}]}$
				\label{alg:full:Rti}
			\State $t \gets t + \Delta t_i$, $i \gets i+1$
				\label{alg:full:increaset}
		\EndWhile \label{alg:full:mainloop_end}
		\State $\Rti{[0,t_f]} \gets \bigcup_{j=0}^{i-1} \Rti{[t_j, t_{j+1}]}$
			\label{alg:full:R0T}
	\end{algorithmic}
\end{algorithm}
%
%
\begin{proof}
We first show that the algorithm terminates.
The while-loop in \algref{alg:postTSTT} always terminates if
\begin{equation*}
	\forall i: \bloatPRti{[t_i, t_{i+1}]} \leq \bloatPRtiadm{i}
	\; , \; \bloatHRti{[t_i, t_{i+1}]} \leq \bloatHRmax{}	\; .
\end{equation*}
The first inequality can be satisfied as shown in the proof of \propref{prop:bloatPRtiadm}.
From \lineref{alg:postTSTT:decreaseDeltat} in \algref{alg:postTSTT}
and under \assref{ass:bloatHRPR}, we have
$\forall i: \Delta t_i \rightarrow 0 \Rightarrow \bloatHRti{[t_i,t_{i+1}]} \rightarrow 0$.
Thus, every bound $\bloatHRmax{}$ is satisfiable.
The while-loop in \algref{alg:propInhom} terminates as we will
either exceed the admissible error bound $\bloatSettiadm{i}$
during the continuous reduction or exit the loop
if the set parameters $\SetReprP{}$ have been reduced as much as possible.
Finally, the main loop (\linerangeref{alg:full:mainloop_start}{alg:full:mainloop_end})
terminates as the time~$t$ monotonically increases (\lineref{alg:full:increaset}),
eventually reaching~$t_f$.

To prove that the error stays below a user-defined bound,
we split the error $\bloatmax{}$
in $\bloatmax{} = \bloatHRmax{} + \bloatPRmax{} + \bloatSetmax{}$.
All individual bounds 
are satisfiable:
We have already shown the satisfiability of any $\bloatHRmax{}$
in the beginning of this proof.
\propref{prop:bloatPRtiadm} shows that any limit $\bloatPRmax{}$ is satisfiable.
Lastly, any bound~$\bloatSetmax{}$ can be satisfied since 
we can choose to not over-approximate the set representation.
\end{proof}

\noindent We now present two algorithms performing the adaptive parameter tuning:
As an overview, \algref{alg:postTSTT} adapts
the parameters $\Delta t_i$ and $\AppModPidx{i}$,
while \algref{alg:propInhom} adapts the parameters $\SetReprPidx{i}$
concurrently to the propagation of $\PRset{}$.

We first present \algref{alg:postTSTT}:
As the main idea, every $\AppModP{}$ is checked for an iteratively decreasing $\Delta t_i$
until the error bounds $\bloatHRmax{}$ and $\bloatPRtiadm{i}$ are satisfied. 
Initially, we increase the time step size by division with the factor $\factor{}$ 
(\lineref{alg:postTSTT:startDeltat}).
Furthermore, we reset~$\AppModPidx{i}$ to its coarsest setting (\lineref{alg:postTSTT:startDeltat})
and calculate the admissible error $\bloatPRtiadm{i}$ (\lineref{alg:postTSTT:bloatPRtiadm}).
In the loop (\linerangeref{alg:postTSTT:loop_start}{alg:postTSTT:loop_end}),
we increment $\AppModPidx{i}$ (\lineref{alg:postTSTT:incrAppModP})
and calculate the errors $\bloatHRti{[t_i,t_{i+1}]}$ and $\bloatPRti{[t_i,t_{i+1}]}$
based on the error sets $\HRplusti{[t_i,t_{i+1}]}$ and $\PRplusti{[t_i,t_{i+1}]}$
(\linerangeref{alg:postTSTT:errors_start}{alg:postTSTT:errors_end}).
These errors are then compared to their respective error bounds
(\lineref{alg:postTSTT:loop_end}).
If $\AppModPidx{i}$ reaches $\AppModPmax{}$ (\lineref{alg:postTSTT:AppModPmax}),
we decrease $\Delta t_i$, reset $\AppModPidx{i}$ (\lineref{alg:postTSTT:decreaseDeltat}),
and recalculate the admissible bound $\bloatPRtiadm{i}$
(\lineref{alg:postTSTT:updatebloatPRtiadm})
before restarting the loop (\lineref{alg:postTSTT:continue}).
After the loop is finished, the accumulated error $\bloatPRti{[0,t_{i+1}]}$
is computed (\lineref{alg:postTSTT:bloatPRti_acc}).

\FloatBarrier

\begin{algorithm}[h!tb]
	\caption{Adapt values for $\Delta t, \AppModP{}$} \label{alg:postTSTT}
	\textbf{Input:} $\bloatHRmax{}, \bloatPRmax{}, \bloatPRti{[0,t_i]}, \Delta t_{i-1}, \factor{}, t_f$
	
	\textbf{Output:} $\Delta t_i, \AppModPidx{i}, \bloatPRti{[0,t_{i+1}]}$
	\begin{algorithmic}[1]
		\State $\Delta t_i \gets \frac{\Delta t_{i-1}}{\factor{}}$, $\resetOp{\AppModPidx{i}}$
			\label{alg:postTSTT:startDeltat}
		\State calc. $\bloatPRtiadm{i}$ acc. to \eqref{eq:bloatPRtiadm}
			\label{alg:postTSTT:bloatPRtiadm}
		\State \textbf{do}
			\label{alg:postTSTT:loop_start}
			\State $\quad$ $\incrOp{\AppModPidx{i}}$
				\label{alg:postTSTT:incrAppModP}
			\State $\quad$ \textbf{if} $\AppModP{} \geq \AppModPmax{}$
				\label{alg:postTSTT:AppModPmax}
				\State $\qquad$ $\Delta t_i \gets \Delta t_i \factor{}$, $\resetOp{\AppModPidx{i}}$
					\label{alg:postTSTT:decreaseDeltat}
				\State $\qquad$ calc. $\bloatPRtiadm{i}$ acc. to \eqref{eq:bloatPRtiadm}
					\label{alg:postTSTT:updatebloatPRtiadm}
				\State $\qquad$ \textbf{continue} 
					\label{alg:postTSTT:continue}
			\State $\quad$ \textbf{end if}
			\State $\quad$ calc. $\HRplusti{[t_i,t_{i+1}]}, \bloatHRti{[t_i, t_{i+1}]}$
				\label{alg:postTSTT:errors_start}
			\State $\quad$ calc. $\PRplusti{[t_i,t_{i+1}]}, \bloatPRti{[t_i, t_{i+1}]}$
				\label{alg:postTSTT:errors_end}
		\State \textbf{while} $\bloatHRti{[t_i, t_{i+1}]} > \bloatHRmax{}
			\wedge \bloatPRti{[t_i, t_{i+1}]} > \bloatPRtiadm{i}$
				\label{alg:postTSTT:loop_end}
		\State calc. $\bloatPRti{[0,t_{i+1}]}$ acc. to \eqref{eq:bloatPRti_acc}
			\label{alg:postTSTT:bloatPRti_acc}
	\end{algorithmic}
\end{algorithm}

\FloatBarrier

The adaptation of~$\SetReprP{}$ is shown in \algref{alg:propInhom}:
In order to decrease the computation time, the number of parameters
describing $\PRset{}$ is iteratively reduced from a high-precision representation
towards lower precision.
First, we calculate the admissible error~$\bloatSettiadm{i}$
(\lineref{alg:propInhom:bloatSettiadm}) for the given~$\Delta t_i$.
The operator $\paramOp{\SetReprP{}}$ extracts
the number of stored parameters~$\SetReprPidx{i}$
from $\PRti{[0,t_{i+1}]}$ (\lineref{alg:propInhom:readSetReprP}).
Inside the loop (\linerangeref{alg:propInhom:loop_start}{alg:propInhom:loop_end}),
we iteratively decrease $\SetReprPidx{i}$ (\lineref{alg:propInhom:decrSetReprP})
and over-approximate $\PRti{[0,t_{i+1}]}$
by reducing the number of stored parameters to $\SetReprPidx{i}$
(\lineref{alg:propInhom:reduce}).
From the difference between the original set and the set over-approximated by reduction,
we calculate the induced error~$\bloatSetti{[t_i,t_{i+1}]}$ (\lineref{alg:propInhom:bloatSetti})
and compare it to the admissible error (\lineref{alg:propInhom:loop_end}).
After the loop,
the accumulated error for the set representation $\bloatSetti{[0,t_{i+1}]}$
is computed (\lineref{alg:propInhom:bloatSetti_prop}).

\FloatBarrier

\begin{algorithm}[h!tb]
	\caption{Adapt values for $\SetReprP{}$} \label{alg:propInhom}
	\textbf{Input:} $\bloatSetmax{}, \bloatSetti{[0,t_i]}, \PRti{[0,t_{i+1}]}, \Delta t_i, t_f$
	
	\textbf{Output:} $\PRti{[0,t_{i+1}]}, \bloatSetti{[0,t_{i+1}]}$
	\begin{algorithmic}[1]
		\State calc. $\bloatSettiadm{i}$ acc. to \eqref{eq:bloatSettiadm}
			\label{alg:propInhom:bloatSettiadm}
		\State $\SetReprPidx{i} \gets \paramOp{\PRti{[0,t_{i+1}]}}$
			\label{alg:propInhom:readSetReprP}
		\State \textbf{do}
			\label{alg:propInhom:loop_start}
			\State $\quad$ $\decrOp{\SetReprPidx{i}}$
				\label{alg:propInhom:decrSetReprP}
			\State $\quad$ $\redOp{\PRti{[0,t_{i+1}]}}{\SetReprPidx{i}}$
				\label{alg:propInhom:reduce}
			\State $\quad$ calc. $\bloatSetti{[t_i,t_{i+1}]}$
				\label{alg:propInhom:bloatSetti}
		\State \textbf{while} $\bloatSetti{[t_i,t_{i+1}]} < \bloatSettiadm{i}
			\vee \decrOp{\SetReprP{}} = \SetReprP{}$
			\label{alg:propInhom:loop_end}
		\State calc. $\bloatSetti{[0,t_{i+1}]}$ acc. to \eqref{eq:bloatSetti_prop}
			\label{alg:propInhom:bloatSetti_prop}
	\end{algorithmic}
\end{algorithm}

\FloatBarrier

The presented framework for adaptive parameter tuning
can be applied to any reachability algorithm.
In the next section, we will provide an example implementation
and prove the assumptions underlying \thmref{thm:alg:full}.


\section{Implementation}
\label{sec:implementation}

In this section, we present an implementation of our framework
and validate our assumptions using a concrete reachability algorithm and a concrete set representation.

\subsection{Computation of Reachable Sets}
\label{ssec:setprop}

We over-approximate the exponential matrix~$e^{At}$
by a finite number of Taylor terms~$\eta \in \naturalsof{+}$
and an interval matrix~$\E{}$ enclosing the remainder \cite[Prop. 2]{Althoff2011a}.
\begin{align}
	e^{A \Delta t} &\in \sum_{k=0}^{\eta} \frac{(A \Delta t)^k}{k!} \oplus \Eof{\Delta t, \eta} \; ,
		\label{eq:eAt} \\
	\Eof{\Delta t, \eta} &= [-\Eabsof{\Delta t, \eta}, \Eabsof{\Delta t, \eta}]\label{eq:E}  \\
	\text{with} \quad \Eabsof{\Delta t, \eta}
		&= \Big| \sum_{k=\eta + 1}^\infty \frac{1}{k!} \big(|A| \Delta t\big)^k \Big| \; . \label{eq:Eabs}
\end{align}
To cover all trajectories in a time interval spanned by~$\Delta t$,
we introduce the terms $\F{}$ \cite[Sec. 4]{Althoff2011a}
and $\inputCorr{}$ \cite[Sec. 3.2.2]{Althoff2010a}:
%
\begin{align}
	\Fof{\Delta t, \eta} &= \bigoplus_{k=2}^\eta [(k^{\frac{-k}{k-1}}
		- k^{\frac{-1}{k-1}}) \Delta t^k,0] \frac{A^k}{k!}
		\oplus \Eof{\Delta t, \eta} \; , \label{eq:F} \\
	\begin{split} \label{eq:inputCorr}
		\inputCorrof{\Delta t, \eta} &= \bigoplus_{k=2}^{\eta+1}
			[(k^{\frac{-k}{k-1}} - k^{\frac{-1}{k-1}}) \Delta t^k,0] \frac{A^{k-1}}{k!} \\
			&\qquad \oplus \Eof{\Delta t, \eta} \, \Delta t \; .
	\end{split}
\end{align}

The reachable sets for the homogeneous and inhomogeneous solution,
generally defined in~\eqref{eq:HRtieqplus} and~\eqref{eq:PRtieqplus},
are computed by~\cite[Sect. 3.2.1-3.2.2]{Althoff2010a}
\begin{align}
	\HRti{[0, \Delta t]} &=
		\underbrace{\CH{\initset{}, e^{A \Delta t} \initset{}}}_{= \, \HReqtiunderbrace{[0,\Delta t]}} \nonumber \\
		&\qquad \oplus \underbrace{\Fof{\Delta t, \eta} \, \initset{}
		\oplus \inputCorrof{\Delta t, \eta} \, \uTrans{}}
			_{= \, \HRplustiunderbrace{[0,\Delta t]}} \;, \label{eq:HRti_cora} \\
	\PRti{[0, \Delta t]} &= \underbrace{\sum_{k=0}^{\eta}
		\Big( \frac{A^k \Delta t^{k+1}}{(k+1)!} \Big) \, \inputset{}}_{= \, \PReqtiunderbrace{[0,\Delta t]}}
		\oplus \underbrace{\Eof{\Delta t, \eta} \, \Delta t \, \inputset{}}_{= \, \PRplustiunderbrace{[0,\Delta t]}} ,
			\label{eq:PRti_cora}
\end{align}
with~$\uTrans{}$ being the center of the input set~$\inputset{}$.
We include the term~$\inputCorrof{\Delta t, \eta} \, \uTrans{}$ in the homogeneous solution
as it only covers the current time interval and therefore does not accumulate over time.
In the next section, we will verify the applicability of
\eqref{eq:HRti_cora} and \eqref{eq:PRti_cora} for our adaptive framework.

\subsection{Verification of Assumptions}
\label{ssec:verass}

We now want to verify \assref{ass:bloatHRPR}
and \propref{prop:bloatPRtiadm}.
%
To obtain the error $\bloatHRti{[t_i, t_{i+1}]}$
of the homogeneous solution $\HRti{[t_i, t_{i+1}]}$,
we multiply \eqref{eq:HRti_cora} by $e^{A t_i}$ as required in \eqref{eq:HRPR_start}
and insert the result in \eqref{eq:bloatHRti}, which yields
\begin{align}
	\begin{split} \label{eq:bloatHRti_cora}
	\bloatHRti{[t_i, t_{i+1}]} &=
		\errOp{ e^{A t_i} \, \Fof{\Delta t_i, \eta_i} \, \initset{} \\
			&\qquad \qquad \oplus \, e^{A t_i} \, \inputCorrof{\Delta t_i, \eta_i} \, \uTrans{} } \; .
	\end{split}
\end{align}
\begin{proposition} \label{prop:bloatHRti_cora}
The error $\bloatHRti{[t_i,t_{i+1}]}$ in \eqref{eq:bloatHRti_cora}
converges to 0 for $\Delta t_i \rightarrow 0$
and therefore satisfies \assref{ass:bloatHRPR}.
\end{proposition}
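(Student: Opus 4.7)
The plan is to reduce the quantity in~\eqref{eq:bloatHRti_cora} to two independently vanishing summands by establishing subadditivity of $\errOp{\cdot}$, and then to argue that each summand is $O(\Delta t_i^{2})$ as $\Delta t_i \to 0$. For the subadditivity step, I would observe that $\Sarb_1 \oplus \Sarb_2 \subseteq \boxOp{\Sarb_1} \oplus \boxOp{\Sarb_2}$, so $\boxOp{\Sarb_1 \oplus \Sarb_2} \subseteq \boxOp{\Sarb_1} \oplus \boxOp{\Sarb_2}$; since, for boxes, the coordinatewise distance from the origin to the farthest vertex is bounded by the sum of the corresponding distances of the summand boxes, the $\ell_2$ triangle inequality yields $\errOp{\Sarb_1 \oplus \Sarb_2} \leq \errOp{\Sarb_1} + \errOp{\Sarb_2}$. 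Applied to~\eqref{eq:bloatHRti_cora}, this isolates the two contributions $e^{A t_i}\,\Fof{\Delta t_i,\eta_i}\,\initset{}$ and $e^{A t_i}\,\inputCorrof{\Delta t_i,\eta_i}\,\uTrans{}$, which I then treat separately.

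Next, I would examine the $\Delta t_i$-dependence of both interval matrices. Inspecting~\eqref{eq:F}, every scalar coefficient $[(k^{-k/(k-1)} - k^{-1/(k-1)})\Delta t_i^{k},\,0]$ has both endpoints of order $\Delta t_i^{k}$ with $k \geq 2$, and the remainder $\Eof{\Delta t_i,\eta_i}$ is bounded componentwise by~\eqref{eq:Eabs}, i.e.\ by $\sum_{k \geq \eta_i+1} \tfrac{1}{k!}(|A|\Delta t_i)^{k} = O(\Delta t_i^{\eta_i+1})$, which is at worst $O(\Delta t_i^{2})$ since $\eta_i \geq 1$. The same analysis of~\eqref{eq:inputCorr} shows that every entry of $\inputCorrof{\Delta t_i,\eta_i}$ is $O(\Delta t_i^{2})$, with the explicit factor $\Delta t_i$ multiplying $\Eof{\Delta t_i,\eta_i}$ only lowering the order of that term. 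Hence both interval matrices converge to the zero matrix at rate at least $O(\Delta t_i^{2})$.

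To close the argument, I would combine these scalings with the fact that $e^{A t_i}$ and the compact sets $\initset{}$ and $\{\uTrans{}\}$ are all bounded independently of $\Delta t_i$. An interval matrix whose componentwise bound is $\varepsilon$ maps a bounded set to a set whose box enclosure has half-widths $O(\varepsilon)$, and this property is preserved, up to the factor $\|e^{A t_i}\|$, by left multiplication with the fixed matrix $e^{A t_i}$. Consequently both $\errOp{e^{A t_i}\,\Fof{\Delta t_i,\eta_i}\,\initset{}}$ and $\errOp{e^{A t_i}\,\inputCorrof{\Delta t_i,\eta_i}\,\uTrans{}}$ are $O(\Delta t_i^{2})$, and subadditivity from the first step then forces $\bloatHRti{[t_i,t_{i+1}]} \to 0$, which is exactly what \assref{ass:bloatHRPR} requires.

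The main obstacle is making the interval-matrix-times-convex-set scaling precise: one needs a clean statement that the Hausdorff distance from the image of an arbitrary compact convex set under an interval matrix to $\{0\}$ scales linearly in the componentwise size of that interval matrix. Once this bookkeeping is carried out, for instance by a coordinatewise box enclosure of $\initset{}$ or via support functions, the remainder collapses to routine polynomial estimates in $\Delta t_i$.
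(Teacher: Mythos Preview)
Your proposal is correct and follows essentially the same route as the paper: both arguments reduce the claim to showing that the interval matrices $\Fof{\Delta t_i,\eta_i}$ and $\inputCorrof{\Delta t_i,\eta_i}$ shrink to $[\zeroMat{n},\zeroMat{n}]$ as $\Delta t_i \to 0$ by inspecting the series in \eqref{eq:F}, \eqref{eq:inputCorr}, and \eqref{eq:Eabs}. Your version is more careful---you make the subadditivity of $\errOp{\cdot}$ explicit and extract a quantitative $O(\Delta t_i^{2})$ rate---whereas the paper simply asserts the limits and leaves the set-level bookkeeping implicit.
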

\begin{proof}
Since the operation $\errOp{\Sarb{}}$ returns the enclosing radius
of the interval over-approximation of a set $\Sarb{}$,
it suffices to show that the volume of $\Sarb{}$
converges to 0 for $\Delta t_i \rightarrow 0$.
This is the case if the interval matrices $\F{}$ and $\inputCorr{}$
converge to $[\zeroMat{n}, \zeroMat{n}]$:
Using \eqref{eq:inputCorr}, we yield
$\lim_{\Delta t \rightarrow 0} \inputCorrof{\Delta t, \eta} = [\zeroMat{n}, \zeroMat{n}]$.
%
For $\F{}$, we have that
$\lim_{\Delta t \rightarrow 0} \Eof{\Delta t, \eta} \overset{\eqref{eq:E}}{=} [\zeroMat{n},\zeroMat{n}]$ since
$\lim_{\Delta t \rightarrow 0} \Eabsof{0, \eta} \overset{\eqref{eq:Eabs}}{=} \zeroMat{n}$.
By plugging this in \eqref{eq:F}, we immediately see that
$\Fof{\Delta t, \eta} = [\zeroMat{n}, \zeroMat{n}]$ for $\Delta t \rightarrow 0$.
%
%
\end{proof}

To obtain the error $\bloatPRti{[t_i, t_{i+1}]}$
of the inhomogeneous solution $\PRti{[t_i, t_{i+1}]}$,
we multiply \eqref{eq:PRti_cora} by $e^{A t_i}$ as required in \eqref{eq:HRPR_start}
and insert the result in \eqref{eq:bloatPRti}, which gives us
\begin{equation} \label{eq:bloatPRti_cora}
	\bloatPRti{[t_i, t_{i+1}]} =
		\errOp{ e^{A t_i} \, \Eof{\Delta t_i, \eta_i} \, \Delta t_i \, \inputset{} } \; .
\end{equation}
\begin{proposition} \label{prop:bloatPRti_cora}
The error $\bloatPRti{[t_i,t_{i+1}]}$ in \eqref{eq:bloatPRti_cora}
converges to 0 for $\Delta t_i \rightarrow 0$
and therefore satisfies \assref{ass:bloatHRPR}.
\end{proposition}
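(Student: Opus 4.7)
The plan is to mirror the argument used for Proposition \ref{prop:bloatHRti_cora}, which already handled a structurally more complex expression. Since $\errOp{\cdot}$ returns the radius of the smallest origin-centered hypersphere enclosing the box over-approximation of its argument, I would reduce the claim to showing that the set $e^{A t_i} \, \Eof{\Delta t_i, \eta_i} \, \Delta t_i \, \inputset{}$ shrinks to $\{\zeroMat{n}\}$ as $\Delta t_i \to 0$. Because $\Eof{\Delta t_i, \eta_i}$ is symmetric around the origin by \eqref{eq:E}, the image set contains $\zeroMat{n}$ for every $\Delta t_i$, so its convergence to $\{\zeroMat{n}\}$ implies convergence of its box-enclosure radius, and hence of $\errOp{\cdot}$, to zero.

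Next I would invoke the limit already established within the proof of Proposition \ref{prop:bloatHRti_cora}, namely $\lim_{\Delta t \to 0} \Eof{\Delta t, \eta} = [\zeroMat{n}, \zeroMat{n}]$, which follows from \eqref{eq:Eabs} because every entry of $\Eabsof{\Delta t, \eta}$ is a convergent power series in $\Delta t$ without a constant term. The additional scalar factor $\Delta t_i$ appearing in \eqref{eq:bloatPRti_cora} — which is absent in \eqref{eq:bloatHRti_cora} — only accelerates the collapse, so $\Eof{\Delta t_i, \eta_i} \, \Delta t_i \to [\zeroMat{n}, \zeroMat{n}]$.

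Finally I would lift this matrix-level limit to the set level. For fixed $t_i$, the linear map $e^{A t_i}$ is bounded and independent of $\Delta t_i$, and the input set $\inputset{} \subset \R{n}$ is compact and also independent of $\Delta t_i$. Multiplication of an interval matrix tending to $[\zeroMat{n}, \zeroMat{n}]$ by a bounded set yields a set whose box enclosure has vanishing radius, and applying the fixed bounded linear map $e^{A t_i}$ preserves this property. Hence $\errOp{e^{A t_i} \, \Eof{\Delta t_i, \eta_i} \, \Delta t_i \, \inputset{}} \to 0$ as $\Delta t_i \to 0$, which establishes Assumption \ref{ass:bloatHRPR}.

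The main obstacle is essentially absent: the proof is a direct and slightly simpler analog of Proposition \ref{prop:bloatHRti_cora}, since the extra $\Delta t_i$ factor accelerates convergence and no convex-hull term $\CH{\cdot}$ or correction term $\inputCorrof{\Delta t, \eta}$ needs to be analyzed. The only care required is to note that the interval matrix $\Eof{\cdot,\cdot}$ is symmetric, which ensures that shrinking of the argument set directly translates into shrinking of its enclosing radius.
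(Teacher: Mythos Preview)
Your proposal is correct and follows essentially the same approach as the paper: both mirror the proof of \propref{prop:bloatHRti_cora} and argue that the set inside $\errOp{\cdot}$ collapses to the origin as $\Delta t_i \to 0$. The paper's version is terser---it simply observes that the multiplicative factor $\Delta t_i$ already forces $e^{A t_i}\,\Eof{\Delta t_i,\eta_i}\,\Delta t_i\,\inputset{} \to [\zeroMat{n},\zeroMat{n}]$---whereas you additionally invoke the convergence of $\Eof{\Delta t_i,\eta_i}$ itself, but the underlying argument is the same.
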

\begin{proof}
Equivalently to the proof of \propref{prop:bloatHRti_cora},
we show that the volume of the resulting set converges to 0 for ${\Delta t_i \rightarrow 0}$.
Since $\Delta t_i$ appears as a multiplicative factor, it holds that
$\lim_{\Delta t_i \rightarrow 0} \, e^{A t_i} \, \Eof{\Delta t_i, \eta_i} \, \Delta t_i \, \inputset{}
= [\zeroMat{n}, \zeroMat{n}]$.
$~$ \hfill $~$ 
\end{proof}
We introduce the following lemma for the subsequent derivations:
\begin{lemma} \label{lmm:E}
The size of $\Eof{\Delta t, \eta}$ decreases superlinearly with respect to $\Delta t$:
\begin{equation*}
	\forall \varphi \in (0,1): \Eof{\varphi \Delta t, \eta} \subseteq \varphi \, \Eof{\Delta t, \eta} \; .
\end{equation*}
\end{lemma}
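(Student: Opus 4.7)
The plan is to unfold the definitions of $\Eof{\cdot,\cdot}$ in \eqref{eq:E} and $\Eabsof{\cdot,\cdot}$ in \eqref{eq:Eabs}, then exploit that every summand of $\Eabsof{\Delta t, \eta}$ has degree $k \geq \eta + 1 \geq 1$ in $\Delta t$. Substituting $\varphi \Delta t$ for $\Delta t$ factors out $\varphi^k$ from each term, and since $\varphi \in (0,1)$ with $k \geq 1$, we have $\varphi^k \leq \varphi$. Because each matrix $(|A|\Delta t)^k/k!$ has nonnegative entries, this scalar bound transfers to a componentwise matrix inequality, which in turn lifts to set inclusion of the symmetric interval matrices.

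First I would rewrite
\begin{equation*}
    \Eabsof{\varphi \Delta t, \eta}
        = \sum_{k=\eta+1}^\infty \frac{1}{k!}\bigl(|A|\, \varphi \Delta t\bigr)^k
        = \sum_{k=\eta+1}^\infty \frac{\varphi^k}{k!}\bigl(|A|\, \Delta t\bigr)^k,
\end{equation*}
noting that the outer absolute value in \eqref{eq:Eabs} is redundant since $|A|$ has nonnegative entries and therefore so does each power $(|A|\Delta t)^k$ for $\Delta t \geq 0$. Next I would use the elementary bound $\varphi^k = \varphi \cdot \varphi^{k-1} \leq \varphi$, valid for all $k \geq 1$ and $\varphi \in (0,1)$, to deduce the componentwise inequality $\Eabsof{\varphi \Delta t, \eta} \leq \varphi \, \Eabsof{\Delta t, \eta}$. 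Finally I would plug this into the symmetric interval-matrix definition to conclude
\begin{equation*}
    \Eof{\varphi \Delta t, \eta}
        = \bigl[-\Eabsof{\varphi \Delta t, \eta},\, \Eabsof{\varphi \Delta t, \eta}\bigr]
        \subseteq \bigl[-\varphi\, \Eabsof{\Delta t, \eta},\, \varphi\, \Eabsof{\Delta t, \eta}\bigr]
        = \varphi \, \Eof{\Delta t, \eta}.
\end{equation*}

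There is no real obstacle here: the argument is essentially a one-line observation once the series is expanded. The only bookkeeping item worth flagging is the absolute convergence of the tail series, which is immediate since $\sum_{k=0}^\infty (|A|\Delta t)^k/k! = e^{|A|\Delta t}$ is finite for every finite $\Delta t$, so all manipulations (pulling out $\varphi^k$, termwise bounding by $\varphi$) are justified. The remainder of the proof is then a direct translation of the scalar inequality $\varphi^k \leq \varphi$ into the inclusion of symmetric interval matrices.
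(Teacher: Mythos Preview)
Your proposal is correct and matches the paper's own proof essentially line for line: both reduce the set inclusion to the componentwise inequality $\Eabsof{\varphi \Delta t, \eta} \leq \varphi\,\Eabsof{\Delta t, \eta}$ and establish it by factoring $\varphi^k$ out of each summand and bounding $\varphi^k \leq \varphi$. Your version is slightly more explicit (dropping the redundant outer absolute value, noting absolute convergence), but the argument is the same.
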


\noindent \textit{Proof.}
Using $\Eof{\Delta t, \eta} \overset{\eqref{eq:E}}{=} [-\Eabsof{\Delta t, \eta},\Eabsof{\Delta t, \eta}]$,
it is sufficient to show that each entry of $\Eabsof{\Delta t, \eta}$ decreases superlinearly
with respect to $\Delta t$:
\begin{align*}
	\hspace{20pt}
	&\forall \varphi \in (0,1): \Eabsof{\varphi \Delta t, \eta} \overset{\eqref{eq:Eabs}}{=}
	\Big| \sum_{k = \eta + 1}^{\infty} \frac{1}{k!} \big(|A| \varphi \Delta t\big)^k \Big| \\
	&\qquad \leq \varphi \, \Big| \sum_{k = \eta + 1}^{\infty} \frac{1}{k!} \big(|A| \Delta t\big)^k \Big|
	= \varphi \, \Eabsof{\Delta t, \eta} \; .
	\hspace{18pt} \square
\end{align*}

\begin{theorem}
The error $\bloatPRti{[t_i,t_{i+1}]}$ in \eqref{eq:bloatPRti_cora}
decreases superlinearly according to \defref{def:superlinear}:
\begin{equation*}
	\forall \varphi \in (0,1): \bloatPRti{[t_i, t_i + \varphi \Delta t_i]}
		\leq \varphi \, \bloatPRti{[t_i, t_i + \Delta t_i]} \; .
\end{equation*}
\end{theorem}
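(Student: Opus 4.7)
The plan is to unfold the definition of $\bloatPRti{[t_i, t_i + \varphi \Delta t_i]}$ via \eqref{eq:bloatPRti_cora}, apply \lmmref{lmm:E} to pull out the factor $\varphi$ from the interval matrix $\E{}$, and then combine with the additional scalar factor $\varphi$ coming from replacing $\Delta t_i$ by $\varphi \Delta t_i$. This should yield a factor of $\varphi^2$ in front of the unscaled error set, and since $\varphi \in (0,1)$ implies $\varphi^2 \leq \varphi$, the claim follows.

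In more detail, I would first write
\begin{equation*}
\bloatPRti{[t_i, t_i + \varphi \Delta t_i]}
  = \errOp{ e^{A t_i} \, \Eof{\varphi \Delta t_i, \eta_i} \, (\varphi \Delta t_i) \, \inputset{} } .
\end{equation*}
Then I would invoke \lmmref{lmm:E} to replace $\Eof{\varphi \Delta t_i, \eta_i}$ by a subset of $\varphi \, \Eof{\Delta t_i, \eta_i}$. Pulling both $\varphi$ factors in front gives the inclusion
\begin{equation*}
e^{A t_i} \, \Eof{\varphi \Delta t_i, \eta_i} \, (\varphi \Delta t_i) \, \inputset{}
  \subseteq \varphi^2 \, e^{A t_i} \, \Eof{\Delta t_i, \eta_i} \, \Delta t_i \, \inputset{} .
\end{equation*}

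Next, I would use two elementary properties of $\errOp{\cdot} = r(\boxOp{\cdot})$: monotonicity under set inclusion (since both sets are symmetric around the origin, the box enclosure of the subset is contained in the box enclosure of the superset, and the enclosing-hypersphere radius is monotone), and positive-homogeneity with respect to scalar multiplication (the box enclosure, and hence its enclosing radius, scale linearly with positive scalar multiplication). Combining these gives
\begin{equation*}
\bloatPRti{[t_i, t_i + \varphi \Delta t_i]}
  \leq \varphi^2 \, \errOp{ e^{A t_i} \, \Eof{\Delta t_i, \eta_i} \, \Delta t_i \, \inputset{} }
  = \varphi^2 \, \bloatPRti{[t_i, t_i + \Delta t_i]} .
\end{equation*}
Finally, $\varphi \in (0,1)$ yields $\varphi^2 \leq \varphi$, and the claimed inequality follows.

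The main obstacle I anticipate is a clean justification of the two properties of $\errOp{\cdot}$ invoked above (monotonicity under inclusion and homogeneity under positive scaling). These are true by construction of $\errOp{\Splus{}} = r(\boxOp{\Splus{}})$, but should be stated explicitly to avoid ambiguity, since the sets involved are linear images of the interval matrix $\E{}$ applied to $\inputset{}$ and thus not automatically centered at the origin unless one notes that $\E{}$ is symmetric about $\zeroMat{n}$ (ensuring $0$ is contained). Beyond that, the argument is essentially a one-line application of \lmmref{lmm:E} together with the explicit $\Delta t_i$ factor appearing in \eqref{eq:bloatPRti_cora}.
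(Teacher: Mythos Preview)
Your proposal is correct and follows essentially the same route as the paper: both arguments unfold \eqref{eq:bloatPRti_cora}, apply \lmmref{lmm:E} to extract one factor of $\varphi$ from $\Eof{\varphi\Delta t_i,\eta_i}$, pick up a second factor from the explicit $\varphi\Delta t_i$, obtain the set inclusion with $\varphi^2$ in front, and conclude via $\varphi^2\le\varphi$. The only cosmetic difference is that the paper first rephrases the claim in terms of the enclosing radius $r(\cdot)$ of $\PRplusset$ and then passes to set inclusion, whereas you work directly with $\errOp{\cdot}$ and explicitly name its monotonicity and positive-homogeneity; your version is slightly more explicit but substantively identical.
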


\textit{Proof:}
Since $\errOp{\cdot}$ is defined by the enclosing radius~$r$
according to \propref{prop:bloat},
it suffices to show that the enclosing radius decreases
superlinearly with respect to $\Delta t_i$:
\begin{equation*} 
	\hspace{-0pt}
	\forall \varphi \in (0,1)\!\!: r ( \PRplusti{[t_i, t_i + \varphi \Delta t_i]} )
	\leq \varphi \, r ( \PRplusti{[t_i, t_i + \Delta t_i]} ) .
\end{equation*}
%
This condition is satisfied if
\begin{equation*}
	\forall \varphi \in (0,1)\!: \PRplusti{[t_i, t_i + \varphi \Delta t_i]}
	\subseteq \varphi \, \PRplusti{[t_i, t_i + \Delta t_i]} \; .
\end{equation*}
Using the definition of $\PRplusti{[0, \Delta t]}$ in \eqref{eq:PRti_cora}
and \lmmref{lmm:E} it holds that
\begin{align*}
	&\PRplusti{[t_i, t_i + \varphi \Delta t_i]}
		\overset{\eqref{eq:PRti_cora}}{=} ( \varphi \, \Delta t_i ) \, e^{A t_i} \,
		\underbrace{\Eof{\varphi \Delta t_i, \eta_i}}_%
			{\overset{\text{\lmmref{lmm:E}}}{\subseteq} \,
			\varphi \, \Eof{\Delta t_i, \eta_i}} \, \inputset{} \subseteq \\
		&\varphi \, \big( \underbrace{\varphi \Delta t_i \, e^{A t_i} \, \Eof{\Delta t_i, \eta_i} \, \inputset{}}_%
			{\overset{\eqref{eq:PRti_cora}}{=} \,
			\varphi \, \PRplusti{[t_i, t_i + \Delta t_i]}} \big)
		\overset{\varphi \in (0,1)}{\subseteq} \varphi \, \PRplusti{[t_i, t_i + \Delta t_i]}. \;
	\square
\end{align*}

The cut-off value $\etamax{}$ can be automatically obtained according to \cite{Bickart1968}
to truncate the power series in \eqref{eq:eAt}.
If $\etamax{}$ does not satisfy the current error bounds,
we proceed to smaller values for $\Delta t$
which are guaranteed to eventually satisfy the error by \thmref{thm:alg:full}.
%
%

Following the above derivations, the presented implementation satisfies \thmref{thm:alg:full}.
Hence, it can be used to perform reachability analysis for LTI systems
while remaining below a user-defined error bound.

\subsection{Set Representation}
\label{ssec:setrepr}

It remains to choose a set representation.
The work in~\cite{Althoff2016c} shows that zonotopes are optimal
and that one should add support functions if the initial set is not a zonotope.
Since we only use zonotopes as initial sets, we only use them:

\begin{definition} \label{def:zonotopes} (Zonotopes)
Given a center ${c \in \R{n}}$
and an arbitrary number ${\Gsize{} \in \naturals{}}$
of generator vectors ${g^{(1)}, ..., g^{(\Gsize{}{})} \in \R{n}}$,
a zonotope is defined as \cite[Def. 1]{Girard2005}
\begin{equation*} 
	\mathcal{Z} = \Big\{ x \in \R{n} \, \Big| \, x
		= c + \sum_{i=1}^{\Gsize{}} \beta_i \cdot g^{(i)}, \, -1 \leq \beta_i \leq 1 \Big\} \; .
\end{equation*}
The order of a zonotope is $\zonOrder{} = \frac{\Gsize{}}{n}$. \hfill $\square$
\end{definition}

Following \defref{def:zonotopes}, we have that $\SetReprP{} = \zonOrder{}$.
We iteratively decrease the zonotope order by decrementing
the total number of generators describing $\PRti{[0,t_{i+1}]}$
as shown in~\cite{Kopetzki2017},
which also provides us with $\bloatSetti{[t_i,t_{i+1}]}$.
Therefore, we define the operator
$\decrOp{\SetReprP{}}: n\Gsize{} \gets n(\Gsize{} - 1)$.
%
In the next section, we apply the presented implementation.


\section{Numerical Examples}
\label{sec:numex}

We have implemented our approach in \textit{MATLAB}.
To extensively test our approach, we perform several investigations:
First, we measure the computational overhead caused by our parameter tuning.
Next, \algref{alg:full} is evaluated on benchmark systems and compared to manually-tuned \params{}.
Finally, we compare \algref{alg:full} with a genetic algorithm searching for \params{}.
Since the genetic algorithm requires a lot of memory,
we used an Intel Xeon Gold 6136 3.00GHz processor and 768GB of DDR4 2666/3273MHz memory,
all other computations have been performed using an Intel i3 processor with 8GB memory.
For the genetic algorithm and the overhead measurement,
we used the subsequently introduced randomly-generated systems
because the results might vary depending on the investigated system.

\paragraph{Random Generation of Systems}
\label{par:randsys}

We randomly picked complex conjugate pairs of eigenvalues
from a uniform distribution over the range of $[-1,1]$ for the real part
and $[-\mathrm{i},\mathrm{i}]$ for the imaginary part
without loss of generality, since the characteristics of the solution
only depends on the ratio of real and imaginary values.
The state-space form for these eigenvalues is computed
and subsequently rotated by a random orthogonal matrix
of increasing sparsity for higher dimensions.
Furthermore, the initial set $\initset{}$ and the input set $\inputset{}$
are given by hypercubes centered at $(10,...,10)^T$ with edge length 0.5,
and $(1,...,1)^T$ with edge length 0.1, respectively.
Lastly, we set $\bloatmax{} = 0.05$ and $t_f=3$s.


\begin{figure}[h]
\begin{center}
	\setlength{\belowcaptionskip}{-15pt}
	\includegraphics[width = 0.49 \textwidth]{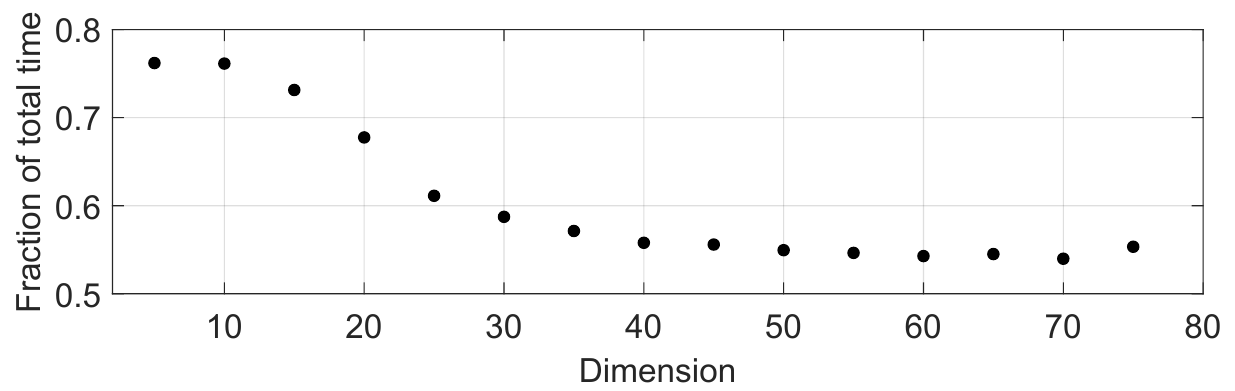}
	\caption{Time consumption by parameter tuning
		in relation to total execution time using
		50 randomly-generated systems per dimension.}
	\label{fig:overhead}
\end{center}
\end{figure}


\paragraph{Computational Overhead}
\label{par:compoverhead}

First, we investigate the computational overhead caused
by the continuous adaptation of all \params{}.
For this purpose, we measured the time \algref{alg:full}
spends on the adaptation and the set propagation
over 50 randomly-generated systems of dimensions 5 to 75.
\figref{fig:overhead} shows the fraction of the total time
spent on the parameter tuning:
The overhead remains manageable even for high-dimensional systems,
settling at about the same time as used for the set propagation.
This is achieved by the computational efficiency of \propref{prop:bloat}.
The overhead is more than compensated by the adaptive adjustment of the \params{}
as discussed at the end of this section.
Also, the common practice of trial and error requires tuning times
that exceed computation times by several factors.

\paragraph{ARCH benchmarks}
\label{par:ARCH}

Next, we have applied our algorithm to the
48-dimensional Building (BLD) benchmark
and the 273-dimensional International Space Station (ISS) benchmark,
both from the ARCH competition \cite{ARCH19}.
These systems are manually tuned for the competition
by executing many runs to optimize the algorithm parameters
with respect to the computation time
while still satisfying all specifications.
For a fair comparison, we set $\bloatmax{}$
to the highest possible value that still verifies the given specification.
We compare our results to the tool \textit{CORA} \cite{Althoff2015a}
since it is also implemented in \textit{MATLAB}.




\begin{table*}[t]
\begin{center}
	\caption{Results of ARCH benchmarks for \algref{alg:full} and \textit{CORA}.}
	\label{tab:ARCH}
	\begin{tabular}{l
					c @{\hspace{7pt}}
					c @{\hspace{7pt}}
					c @{\hspace{7pt}}
					c @{\hspace{7pt}}
					c
					c @{\hspace{7pt}}
					c @{\hspace{7pt}}
					c}
		\toprule
		\multirow{2}{2.1cm}{\textbf{Benchmark}} & \multicolumn{4}{c}{\textbf{\algref{alg:full}}} & &
			\multicolumn{3}{c}{\textbf{CORA}} \\ \cmidrule{2-5} \cmidrule{7-9}
		 & $\bloatmax{}$ & Time & Steps & $[\Delta t_{\text{min}}, \Delta t_{\text{max}}]$ &
		 & Time & Steps & $[\Delta t_{\text{min}}, \Delta t_{\text{max}}]$ \\ \cmidrule{1-9}
		BLDC01 & $2 \cdot 10^{-3}$ & 4.0s & 839 & $[0.0081,0.0448]$ & & 5.5s & 2400 & $[0.0020,0.0100]$ \\
		BLDF01 & $6 \cdot 10^{-3}$ & 5.6s & 818 & $[0.0081,0.0597]$ & & 6.0s & 2400 & $[0.0020,0.0100]$ \\
		ISSC01 & $5.6$ & 21.7s & 189 & $[0.0704,0.1371]$ & & 22.8s & 1000 & $[0.0200,0.0200]$ \\
		ISSF01 & $2 \cdot 10^{-3}$ & 295s & 1216 & $[0.0059,0.0395]$ & & 849s & 2000 & $[0.0100,0.0100]$ \\
		\bottomrule 
	\end{tabular}
\end{center}
\vspace{-20pt} 
\end{table*}

\tabref{tab:ARCH} shows a comparison between \algref{alg:full} and \textit{CORA}
in terms of the computation time, the number of steps,
and the minimum and maximum values for~$\Delta t$.
\algref{alg:full} adapts the values of the \params{} depending on the current system behavior
by enlarging the time step size in regions where this only moderately increases the over-approximation.
The resulting range can be observed by the large differences
between $\Delta t_\text{min}$ and $\Delta t_\text{max}$.
Consequently, the total number of steps decreases making less computations
necessary than in the case of fixed algorithm parameters as used by \textit{CORA}
and shown in \tabref{tab:ARCH}.
The range for $\Delta t$ in both building benchmarks by \textit{CORA}
is explained by the switching between two manually-tuned time steps.




\begin{figure}[h]
\begin{center}
	\setlength{\belowcaptionskip}{-25pt}
	\includegraphics[width = 0.48 \textwidth]{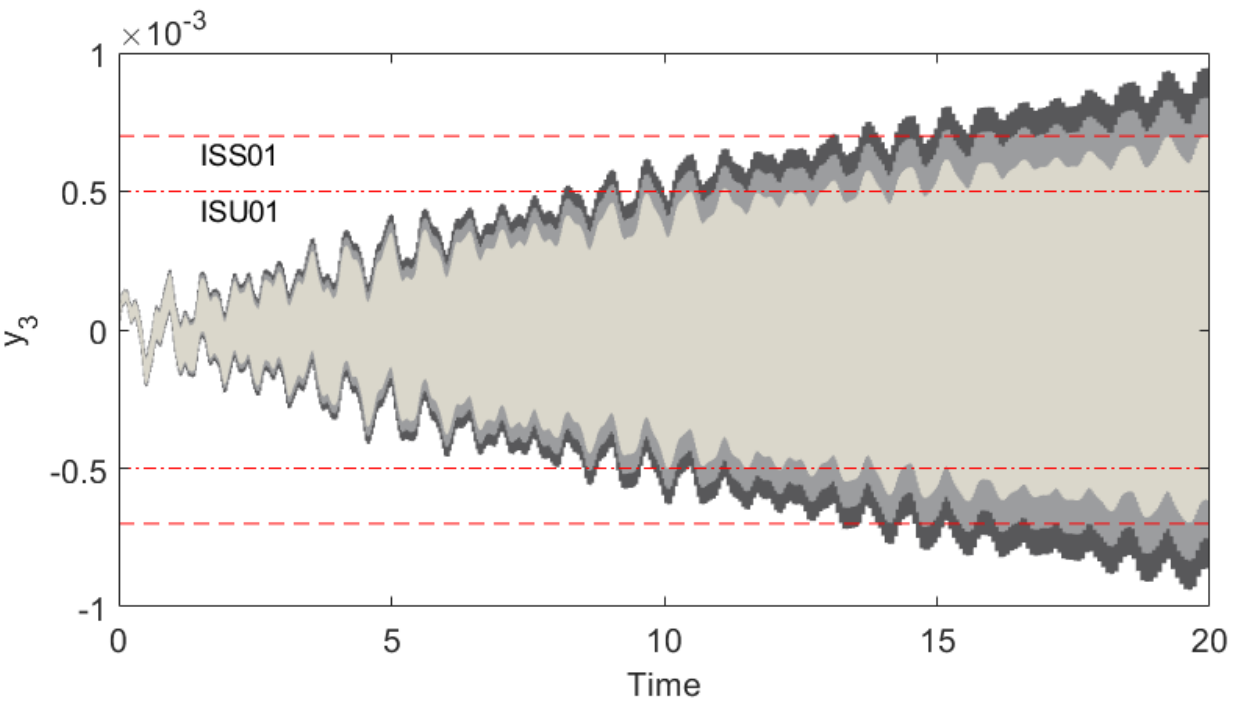}
	\caption{Benchmark ISSF01 with 
		the specifications ISS01, ISU01,
		and the reachable sets $\Rti{[0,T]}$ in
		dark gray ($\bloatmax{} = 20 \cdot 10^{-3}$),
		light gray ($\bloatmax{} = 10 \cdot 10^{-3}$),
		and ivory ($\bloatmax{} = 2 \cdot 10^{-3}$).}
	\label{fig:diffbloatmax}
\end{center}
\end{figure}


\begin{figure*}
    \centering
    \setlength{\belowcaptionskip}{-15pt}
    \begin{subfigure}[b]{0.3\textwidth}
        \includegraphics[width=\textwidth]{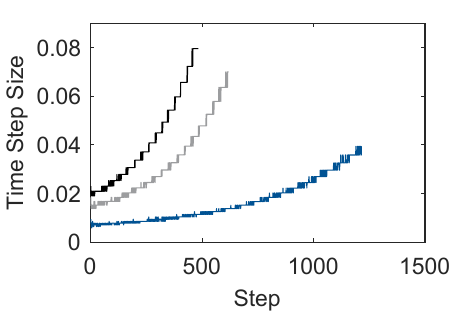}
    \end{subfigure}
    ~ 
    \begin{subfigure}[b]{0.3\textwidth}
        \includegraphics[width=\textwidth]{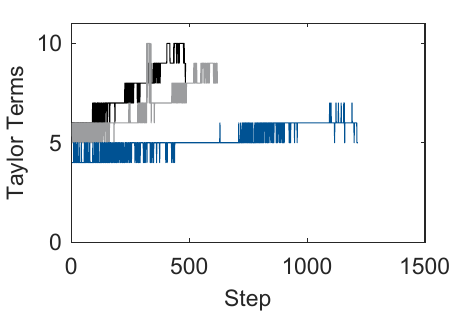}
    \end{subfigure}
    \begin{subfigure}[b]{0.3\textwidth}
        \includegraphics[width=\textwidth]{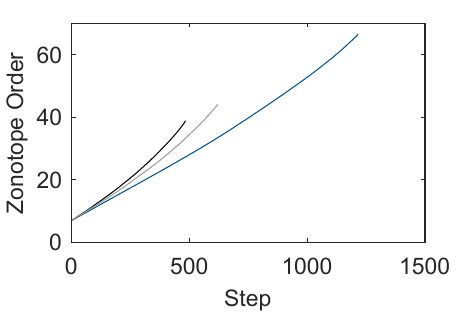}
    \end{subfigure}
    \captionsetup{
    	width=\textwidth,  
    	labelfont=bf,        
    	font=small,          
	}
	\caption{Benchmark ISSF01: $\Delta t, \eta, \zonOrder{}$ for different $\bloatmax{}$:
		black ($\bloatmax{}=20\cdot 10^{-3}$),
		gray ($\bloatmax{}=10\cdot 10^{-3}$),
		and blue ($\bloatmax{}=2\cdot 10^{-3}$).}
	\label{fig:ISSFparams}
\end{figure*}


\figref{fig:diffbloatmax} shows the reachable sets for the benchmark ISSF01
using different values of $\bloatmax{}$:
The smaller the defined error bound, the tighter are the reachable sets.
We also recognize the linear increase of the admissible error bound over time
as the computed sets differ more towards the end of the time horizon.
\figref{fig:ISSFparams} shows the evolution of all \params{}
corresponding to the different values of $\bloatmax{}$:
A higher value for $\bloatmax{}$ yields a larger initial $\Delta t$
and a smaller total number of steps.
Note that the switching between previously-computed values of $\Delta t$ does not add
any computations since the sets are read from memory once they are computed.
The number of Taylor terms $\eta$ is chosen jointly with $\Delta t$
and increases towards the end of the time horizon
to facilitate larger time step sizes.
We also observe that the zonotope order $\zonOrder{}$
reaches a higher maximum for smaller values of $\bloatmax{}$
since in that case we cannot reduce as much as for larger $\bloatmax{}$.


\paragraph{Genetic Algorithm Comparison}
\label{par:GA}

Finally, we want to compare our approach to 
a genetic algorithm searching for $\Delta t$, $\eta$, and~$\rho$.
To this end, we use the \emph{MATLAB} built-in genetic algorithm function.
While the parameters $\eta$ and $\rho$ are fixed,
we model the time step size by a polynomial up to order 2:
$\Delta t(t) = a + bt + ct^2$.
We restrict these parameters by the ranges
$\eta \in [1, 10]$, $\rho \in [2, 1000]$,
$a \in [0.0003,0.3], b \in [-0.1,0.1], c \in [-0.033,0.033]$.
The chosen bounds for $a,b$, and $c$ prevent $\Delta t$
from too drastic growth or shrinkage,
thereby focussing on suitable curves of $\Delta t$.
Higher orders did not provide any benefits.

In order to establish a level playing field,
we terminate once the obtained reachable set is within the box enclosure
of the reachable set of the adaptive algorithm enlarged by 10\%.
For computational efficiency interval over-approximations were used for this comparison.
The cost function is chosen as the maximum distance
to the enlarged adaptive reachable set over all dimensions.

The parameters specific to the genetic algorithm
have been set as follows:
We enable an infinite number of generations
with a maximum of 3 stall generations.
We aim to speed up the convergence by setting only 10 members per generation
as the evaluation of a single member is costly in higher dimensions.
For the members of the next generation, we use a standard crossover fraction of 0.75
and set the elite count to 1, 
carrying the best solution over to the next generation.

We applied \algref{alg:full} and the genetic algorithm
on 50 randomly-generated systems per dimension.
\tabref{tab:GA_systems} compares the average computation time
over varying dimensions of \algref{alg:full}
to the time the genetic algorithms takes until convergence.
The results show that \algref{alg:full} outspeeds all genetic algorithms.
Since \algref{alg:full} tunes the \params{} during runtime,
we only need a single iteration for the computation of the reachable set.
Contrary, the genetic algorithms run over many generations
repeatedly computing the reachable set while iteratively improving the solution
by means of the cost function.
This process is far more time-consuming than
the overhead caused by the adaptive parameter tuning.
The genetic algorithm using the constant polynomial for $\Delta t$
is faster than the higher-order polynomials as they re-compute
auxiliary reachable sets due to the non-constant time step size.
%

\vspace{-0.3cm}

\begin{table}[ht]
\begin{center}
	\caption{Computation time for \algref{alg:full} and the genetic algorithm (GA)
		averaged over 50 randomly-generated systems per dimension.}
	\label{tab:GA_systems}
	\begin{tabular}{l @{\hspace{20pt}}
	                c @{\hspace{8pt}}
	                c @{\hspace{8pt}}
	                c @{\hspace{8pt}}
	                c @{\hspace{8pt}}
	                c @{\hspace{8pt}}
	                c @{\hspace{8pt}}
	                c}
		\toprule
		& \multicolumn{7}{c}{\textbf{Dimension}} \\
		& 5 & 10 & 15 & 20 & 25 & 30 & 40 \\ \cmidrule{2-8}
		\algref{alg:full}            & 0.14s &  0.22s & 0.40s & 0.58s & 1.0s & 1.9s & 6.7s \\
		GA (order: 0) & 1.6s  &  3.4s  & 7.0s  & 10s   & 20s  & 30s  & 50s \\
		GA (order: 1) & 4.1s  &  9.4s  & 13s   & 21s   & 28s  & 40s  & 70s \\ 
		GA (order: 2) & 5.1s  &  10s   & 14s   & 21s   & 42s  & 58s  & 97s \\ 
		\bottomrule 
	\end{tabular}
\end{center}
\vspace{-10pt}
\end{table}



\paragraph{Discussion}
\label{par:discussion}

Our framework can also be applied to
other computations of reachable sets and other set representations.
In order to guarantee convergence and termination for all $\bloatmax{}~\in~\R{}$,
\thmref{thm:alg:full} has to hold for the applied error terms,
similarly as shown in \sectref{ssec:verass} for the presented implementation:
A tool developer has to modify
$\AppModPmax{}$ and $\SetReprP{}$ which are, e.g., the number of template directions
when using template polyhedra.
A corresponding error term for the set representation has to be defined.

The choice of $\errOp{\cdot}$ in \eqref{eq:bloat}
does not add much over-approximation as can be observed from
comparing the ranges for the time step size $[\Delta t_\text{min}, \Delta t_\text{max}]$
to the manually-tuned $\Delta t$ by \textit{CORA} in \tabref{tab:ARCH},
where we see that \algref{alg:full} chooses a similar time step size,
yielding comparable results both in terms of the tightness
and the computational efficiency.


The only remaining parameter for the practitioner to set is the error $\bloatmax{}$.
As shown in \figref{fig:diffbloatmax}, increasing the value of $\bloatmax{}$
results in a more over-approximative reachable set and vice versa.
Thus, the setting of $\bloatmax{}$ is intuitive
and can easily be adjusted for any system.


\section{Conclusion}
\label{sec:conclusion}

In this paper, we presented a novel generic framework to automatically tune all \params{} which is a major problem of present reachability algorithms.
The presented algorithm enables a fully-automated computation of the reachable set whose error is below a user-defined error.
Previous work only considered the tuning of time parameters.
An example implementation has shown to outperform manually-tuned \params{} on benchmarks and provides better results than genetic algorithms searching for \params{} on randomly-generated systems of varying dimensions.
The extension of the presented framework to nonlinear systems will be considered in the future.

%
%
%
%

\bibliographystyle{ieeetr}
\bibliography{root}

\end{document}